\newcommand{\al}{\alpha}
\newcommand{\be}{\beta}
\newcommand{\ga}{\gamma}
\newcommand{\de}{\delta}
\newcommand{\la}{\lambda}
\newcommand{\eps}{\varepsilon}
\theoremstyle{plain}
\numberwithin{equation}{section}
\newtheorem{thm}{Theorem}[section]
\newtheorem{lem}[thm]{Lemma}
\newtheorem{prop}[thm]{Proposition}
\newtheorem{cor}[thm]{Corollary}
\theoremstyle{definition}
\newtheorem{ip}[thm]{Inverse Problem}
\newtheorem{ap}[thm]{Auxiliary Problem}
\newtheorem{alg}[thm]{Algorithm}
\theoremstyle{remark}
\newtheorem{remark}[thm]{Remark}
\DeclareMathOperator{\sign}{sign}
\begin{document}

\begin{center}
{\Large\bf Stability of the inverse Sturm-Liouville problem\\[0.2cm] on a graph with a cycle}
\\[0.5cm]
{\bf Natalia P. Bondarenko}
\end{center}

\vspace{0.5cm}

{\bf Abstract.} In this paper, the Sturm-Liouville operators on a graph with a cycle are considered. We study the inverse spectral problem, which consists in the recovery of the potentials from several spectra and a sequence of signs related to the quasiperiodic problem on the loop. The local stability of this inverse problem on the whole graph is proved. In addition, we investigate its uniform stability.

\medskip

{\bf Keywords:} inverse Sturm-Liouville problem; quantum graph; stability. 

\medskip

{\bf AMS Mathematics Subject Classification (2020):} 34A55 34B09 34B45 34L40

\section{Introduction} \label{sec:intr}

Consider the graph $G$ with a cycle presented in Figure~\ref{fig:graph}. The graph $G$ consists of the vertices $\{ v_j \}_0^m$ and the edges $\{ e_j \}_0^m$ ($m \ge 1$). Each edge $e_j$ has length $T_j > 0$ and joins the vertices $v_j$ and $v_0$ (i.e. $e_0$ is a loop). Thus $\{ v_j \}_1^m$ are boundary vertices and $v_0$ is the internal vertex.
We consider each edge $e_j$ as a segment parameterized by $x_j \in [0, T_j]$. The value $x_j = 0$ corresponds to the vertex $v_j$ and $x_j = T_j$, to $v_0$. 

\begin{figure}[h!]
\centering
\begin{tikzpicture}
\filldraw (0, 0) circle (2pt) node[anchor=east]{$v_0$};
\filldraw (3, 1.5) circle (2pt) node[anchor=west]{$v_1$};
\filldraw (3.5, 0.5) circle (2pt) node[anchor=west]{$v_2$};
\filldraw (3, -1.5) circle (2pt) node[anchor=west]{$v_m$};
\draw (0, 0) edge node[auto]{$e_1$} (3, 1.5);
\draw (0, 0) edge node[auto]{$e_2$} (3.5, 0.5);
\draw (0, 0) edge node[anchor=north]{$e_m$} (3, -1.5);
\filldraw (3.5, -0.5) circle (0.5pt);
\filldraw (3.5, -0.2) circle (0.5pt);
\filldraw (3.4, -0.8) circle (0.5pt);
\draw (-1, 0) circle (1);
\draw (-2.3, 0) node{$e_0$};
\end{tikzpicture}
\caption{Graph $G$}
\label{fig:graph}
\end{figure}

Consider the following system of the Sturm-Liouville equations on the graph $G$:
\begin{equation} \label{eqv}
-y_j''(x_j) + q_j(x_j) y_j(x_j) = \lambda y_j(x_j), \quad x_j \in (0, T_j), \quad j = \overline{0,m},
\end{equation}
with the boundary conditions
\begin{equation} \label{bc}
y_j(0) = 0, \quad j = \overline{1,m},
\end{equation}
and the matching conditions 
\begin{equation} \label{mc}
\begin{cases}
y_0(0) = a y_0(T_0) = y_j(T_j), \quad j = \overline{1,m}, \\
y_0'(0) = a^{-1} y_0'(T_0) + \sum\limits_{j = 1}^m y_j'(T_j),
\end{cases}
\end{equation}
where the so-called potentials $q_j$ belong to the corresponding classes $L_{2,\mathbb R}(0,T_j)$ ($j = \overline{0,m}$) of real-valued square-integrable functions, $\la$ is the spectral parameter, and $a \in \mathbb R \setminus \{ -1, 0, 1 \}$.

This paper is focused on an inverse spectral problem that consists in the recovery of the potentials $\{ q_j \}_0^m$ on the graph $G$ from spectral data. For this problem, we aim to investigate the stability, which, to the best of the author's knowledge, was not studied before.

The theory of inverse spectral problems has been most fully investigated for the Sturm-Liouville operators on intervals (see the monographs~\cite{Mar77, Lev84, FY01, Krav20} and references therein). For inverse problems on graphs, there is also an extensive literature. Differential operators on metric graphs, also called quantum graphs, model various processes in spacial networks. Such models appear in classical and quantum mechanics, organic chemistry, nanotechnology, waveguide theory, and other branches of science and engineering (see \cite{PPP04, BCFK06}). Inverse problems for quantum graphs started to be investigated by Gerasimenko and Pavlov~\cite{GP88}. Uniqueness of recovering the Sturm-Liouville potentials on arbitrary tree-graphs (i.e. graphs without cycles) from various types of spectral data has been proved in \cite{Bel04, BW05, Yur05}. The approach of Yurko~\cite{Yur05}, based on the method of spectral mappings \cite{Yur02}, allowed him to obtain a procedure for reconstruction of the potentials on tree-graphs and later on arbitrary compact graphs \cite{Yur10}. Subsequently, generalizations of that approach were developed for non-compact graphs (see, e.g., \cite{BF13, Ign15}). Various aspects of determining the graph structure by spectral characteristics were investigated in \cite{KN05, BV06, BW09, ALM10} and other studies. We separately mention the papers \cite{Yur08, Yur09, Kur10, Kur13, FI11, MT19, YB19, BS20, AE25} about inverse problems on graphs of special structure with a single cycle. More references to literature on inverse spectral theory of quantum graphs can be found in the surveys \cite{Yur16, Bel17, Bond23} and in the recent monographs \cite{MP20, Kur24}. 

The majority of studies on inverse spectral problems for differential operators on graphs are concerned with uniqueness and reconstruction. Stability was investigated much less, and there are only fragmentary results for graphs of specific structure in this direction. In particular, Bondarenko \cite{Bond18, Bond21-mmas} proved the local solvability and stability of partial inverse problems that consisted in recovering the Sturm-Liouville potential on a boundary edge of a graph from some part of the spectrum, while the potentials on the other edges were known a priori. Mochzuki and Trooshin \cite{MT19} have studied the conditional uniform stability of the inverse scattering problem on the lasso-shaped graph with an infinite boundary edge. The authors of \cite{MT19} relied on Marchenko's method \cite{Mar77}, and their result is limited to the potential on the boundary edge, while the transition to the internal loop would be the most challenging part of the problem. In \cite{Bond20-ipse, Bond25-matr}, inverse Sturm-Liouville problems on a compact star-shaped graph were investigated in the matrix form, and their stability was shown in that setting. In \cite{XB22}, several stability theorems were proved for the inverse scattering problem for the Schr\"odinger (Sturm-Liouville) operator on the half-line with the general self-adjoint boundary conditions. The results of \cite{XB22} can be directly applied to the star-shaped graph of infinite rays. We also mention the paper \cite{But23}, wherein the uniform stability of an inverse spectral problem is proved for a nonlocal functional-differential operator on a metric graph. However, nonlocal operators are fundamentally different from local (differential) ones. Inverse spectral analysis for these two classes of operators employs completely different methods.

Thus, up to now, there is no general approach to analyzing the stability of inverse problems for differential operators on graphs of arbitrary structure. At the same time, the recent studies \cite{KA23, AKK24-1, AKK24-2} present numerical methods for reconstruction of the Sturm-Liouville operators on graphs. The effectiveness of those methods in numerical examples, on the one hand, indirectly indicates the stability of the corresponding inverse problems and, on the other hand, motivates the analytical investigation of the stability.  

In this paper, we consider the reconstruction of the potentials $\{ q_j \}_0^m$ on the graph $G$ in Figure~\ref{fig:graph}. As spectral data, we use $(m+1)$ spectra generated by various boundary conditions and a sequence of signs related to the quasiperiodic problem on the loop (see Section~\ref{sec:main} for details). For convenience of working with the spectra, we construct the analytical characteristic functions, whose zeros coincide with the eigenvalues of the corresponding boundary value problems on the graph. We consider a small perturbation of the spectral data and prove the local stability of the inverse problem. Moreover, we study the uniform stability for potentials in a ball of a fixed (not small) radius. It is worth mentioning that fundamental results on the uniform stability of the direct and inverse Sturm-Liouville problems on a finite interval have been obtained by Savchuk and Shkalikov \cite{SS10}. A reader can find more references and a short overview on this topic in \cite{Bond25-matr}.

Our stability analysis is based on a constructive algorithm from \cite{Yur08} for solving the inverse problem. First, the potentials $\{q_j\}_1^m$ on the boundary edges are recovered. In contrast to \cite{Yur08}, we use the Gelfand-Levitan-type equations at this step, since the author finds them more convenient to investigation of the local stability. For studying the uniform stability, we apply the reconstruction formulas of the method of spectral mappings from \cite{Yur08}. Second, by using the boundary spectral data and the potentials on the boundary edges, we obtain the spectral data of the quasiperiodic problem on the loop. For the stability analysis at this step, we show that the characteristic functions of separate edges and of certain subgraphs are Lipschitz continuous with respect to the potentials. Third, we consider the quasiperiodic inverse Sturm-Liouville problem on the loop. That problem and its generalizations were investigated in \cite{Plak88, Yur12, Yur16-caot, Yur20, Bond25-quasi} and other studies. The stability result of \cite{Bond25-quasi} allows us to estimate the perturbation of the potential $q_0$ and thus to finish the proof of the main theorem (Theorem~\ref{thm:loc}). In the future, the technique of this paper can be extended to other classes of quantum graphs, in particular, to arbitrary trees.

The paper is organized as follows. Section~\ref{sec:main} contains the main results and outlines the method of the proofs. In Section~\ref{sec:prelim}, we provide auxiliary representations for the characteristic functions, analyze their dependence on the potentials $\{ q_j \}_0^m$, and present other preliminaries. In Section~\ref{sec:bound}, the stability of the reconstruction of the potentials on the boundary edges is proved. In Section~\ref{sec:transit}, we study the stability of the transition from the boundary edges to the loop. Section~\ref{sec:quasi} contains the necessary information about the quasiperiodic inverse problem and the proof of the theorem on the local stability of the inverse problem on the graph (Theorem~\ref{thm:loc}). In Section~\ref{sec:uni}, we discuss the uniform stability of the inverse problem.

\smallskip

Throughout the paper, we use the following notation:
\begin{itemize}
\item $\mathbf{q} := \{ q_j \}_0^m$, $\mathbf{T} := \sum\limits_{j = 0}^m T_j$.
\item $\mathcal L = \mathcal L(\mathbf{q})$ is the boundary value problem \eqref{eqv}--\eqref{mc}.
\item $\sign a = \begin{cases}
1, & a > 0, \\
0, & a = 0, \\
-1, & a < 0.
\end{cases}$
\item $\la = \rho^2$, $\mu = \theta^2$.
\item $L_{2,\mathbb R}^0(0,T) := \left\{ q \in L_{2,\mathbb R}(0,T) \colon \int_0^T q(t) \, dt = 0\right\}$.
\item $L_{2,\mathbb R}^0(G) := \left\{ \mathbf{q} = \{ q_j \}_0^m \colon q_j \in L_{2,\mathbb R}^0(0,T_j), \, j = \overline{0,m} \right\}$.
\item Along with $\mathcal L$, we consider a problem $\tilde{\mathcal L} = \mathcal L(\tilde{\mathbf q})$ of the same form \eqref{eqv}--\eqref{mc} but with different potentials $\tilde{\mathbf{q}} = \{ \tilde q_j \}_0^m$, $\tilde q_j \in L_{2,\mathbb R}(0, T_j)$. The parameters $m$, $\{ T_j \}_0^m$, and $a$ for the problems $\mathcal L$ and $\tilde{\mathcal L}$ are assumed to be the same. We agree that, if a symbol $\be$ denotes an object related to $\mathcal L$, then the symbol $\tilde \be$ with tilde will denote the analogous object related to $\tilde{\mathcal L}$, and $\hat \be := \be - \tilde \be$.
\item In estimates, the symbol $C$ is used for various positive constants that do not depend on $x$, $\la$, etc. In proofs, we mean that constants $C$ depend on the same parameters as in the formulation of the corresponding theorem or lemma.
\item For $T > 0$, we denote by $PW(T)$ the class of the Paley-Wiener functions of the form
\begin{equation} \label{PW}
\mathcal F(\rho) = \int_{-T}^T f(t) \exp(i \rho t) \, dt, \quad f \in L_2(-T, T).
\end{equation}
Plancherel's theorem implies
\begin{equation} \label{plan}
\| \mathcal F \|_{L_2(\mathbb R)} = \sqrt{2 \pi} \| f \|_{L_2(-T,T)}.
\end{equation}
\item For $Q > 0$, introduce the set 
\begin{equation} \label{BQ}
B_Q := \left\{ \mathbf{q} \in L_{2,\mathbb R}^0(G) \colon \| q_j \|_{L_2(0,T_j)} \le Q, \, j = \overline{0,m} \right\}.
\end{equation}

\item The notation $f(\rho) \asymp g(\rho)$ means that
$$
\forall \rho \quad c |f(\rho)| \le |g(\rho)| \le C |f(\rho)|,
$$
where $c$ and $C$ are positive constants.
\end{itemize}

\section{Main results} \label{sec:main}

For $k = \overline{1,m}$, denote by $\mathcal L_k = \mathcal L_k(\mathbf{q})$ the boundary value problem for equations \eqref{eqv} with the matching conditions \eqref{mc} and the boundary conditions
\begin{equation} \label{bck}
y_k'(0) = 0, \quad y_j(0) = 0, \quad j = \overline{1,m} \setminus k.
\end{equation}

The problems $\mathcal L$ and $\mathcal L_k$ ($k = \overline{1,m}$) are self-adjoint and have purely discrete spectra $\Lambda$ and $\Lambda_k$ ($k = \overline{1,m}$), respectively, which consist of real eigenvalues. For reconstruction of the potentials $\{ q_j \}_0^m$,
we will use the spectra $\Lambda$ and $\Lambda_k$ ($k = \overline{1,m}$) together with some additional information related to the quasiperiodic inverse problem on the loop $e_0$.

For each $j = \overline{0,m}$, denote by $C_j(x_j, \la)$ and $S_j(x_j, \la)$ the solutions of equation \eqref{eqv} satisfying the initial conditions
$$
S_j(0,\la) = C_j'(0,\la) = 0, \quad S_j'(0,\la) = C_j(0,\la) = 1.
$$

For each fixed $x_j \in [0,T_j]$, the functions $C_j^{(\nu)}(x_j, \la)$ and $S_j^{(\nu)}(x_j, \la)$ ($\nu = 0, 1$) are entire in $\la$. Denote by $\{ \la_n \}_1^{\infty}$ the zeros of the function $h(\la) := S_0(T_0,\la)$ and put
\begin{equation} \label{defsi}
H(\la) := a C_0(T_0,\la) - a^{-1} S'_0(T_0,\la), \quad
\sigma_n := \sign H(\la_n), \quad n \ge 1.
\end{equation}

Consider the following inverse spectral problem on the graph $G$:

\begin{ip} \label{ip:eigen}
Given the spectra $\Lambda$, $\Lambda_k$ ($k = \overline{1,m}$) and the signs $\{ \sigma_n \}_1^{\infty}$, find the potentials $\{ q_j \}_0^m$.
\end{ip}

In this paper, we focus on the stability of recovering the potentials. Therefore, the edge lengths $\{ T_j \}_0^m$ and the parameter $a$ are assumed to be known and fixed. Anyway, the reconstruction of this kind of parameters from the spectral data can also be discussed (see, e.g., \cite{ALM10}). 

The eigenvalues of the Sturm-Liouville problems on graphs can be found as the zeros of some entire functions, which are called characteristic functions. In particular, the spectra $\Lambda$ and $\Lambda_k$ ($k = \overline{1,m}$) coincide with the zeros of the characteristic functions $\Delta(\la)$ and $\Delta_k(\la)$ ($k = \overline{1,m}$), respectively, defined as follows (see \cite{Yur08}):
\begin{align} \label{defDelta}
& \Delta(\la) := (d(\la) - 2) \Delta^{\Pi}(\la) + a h(\la) \Delta^K(\la), \\ \label{defDeltak}
& \Delta_k(\la) := (d(\la) - 2) \Delta_k^{\Pi}(\la) + a h(\la) \Delta_k^K(\la), \quad k = \overline{1,m},
\end{align}
where $d(\la)$ is the so-called Hill-type discriminant related to the quasiperiodic problem (see \cite{Bond25-quasi}):
\begin{equation} \label{defd}
d(\la) := a C_0(T_0,\la) + a^{-1} S_0'(T_0,\la) 
\end{equation}
and
\begin{align} \label{defDPK1}
& \Delta^{\Pi}(\la) := \prod_{j = 1}^m S_j(T_j,\la), \quad \Delta^K(\la) := \sum_{r = 1}^m S_r'(T_r, \la) \prod_{\substack{j = 1 \\ j \ne r}}^m S_j(T_j,\la), \\ \label{defDPK2}
& \Delta_k^{\Pi}(\la) := C_k(T_k,\la) \prod_{\substack{j = 1 \\ j \ne k}}^m S_j(T_j,\la), \\ \label{defDPK3} &\Delta_k^K(\la) := C_k'(T_k,\la) \prod_{\substack{j = 1 \\ j \ne k}}^m S_j(T_j,\la) + 
C_k(T_k,\la) \sum_{\substack{r = 1 \\ r \ne k}}^m S_r'(T_r, \la) \prod_{\substack{j = 1 \\ j \ne r, \, j \ne k}}^m S_j(T_j,\la).
\end{align}

It is worth mentioning that the functions participating in the right-hand sides of \eqref{defDelta}--\eqref{defDeltak} can be interpreted as characteristic functions of some eigenvalue problems:
\begin{itemize}
\item The zeros of $d(\la)$ coincide with the eigenvalues of the quasiperiodic problem for equation \eqref{eqv} ($j = 0$) with the boundary conditions
\begin{equation} \label{bcquasi}
    y_0(0) = a y_0(T_0), \quad y_0'(0) = a^{-1} y_0'(T_0).
\end{equation}
\item The zeros of $h(\la) = S_0(T_0,\la)$ coincide with the eigenvalues of the Sturm-Liouville problem for equation \eqref{eqv} ($j = 0$) with the Dirichlet boundary conditions $y_0(0) = y_0(T_0) = 0$.
\item The zeros of $\Delta^{\Pi}(\la)$ coincide with the eigenvalues of the Sturm-Liouville system \eqref{eqv} for $j = \overline{1,m}$ on the graph of $m$ separate edges $\{ e_j \}_1^m$ with the Dirichlet boundary conditions \eqref{bc} and
\begin{equation} \label{bcT}
y_j(T_j) = 0, \quad j = \overline{1,m}.
\end{equation}
\item The zeros of $\Delta^K(\la)$ coincide with the eigenvalues of the Sturm-Liouville system \eqref{eqv} for $j = \overline{1,m}$ on the star-shaped graph consisting of the edges $\{ e_j \}_1^m$ joined at the vertex $v_0$ with the Dirichlet boundary conditions \eqref{bc} and the standard (Kirchhoff) matching conditions
\begin{equation} \label{Kirch}
y_1(T_1) = y_j(T_j), \quad j = \overline{2,m}, \qquad \sum_{j = 1}^m y_j'(T_j) = 0.
\end{equation}
\item For $k \in \{ 1, 2, \dots, m \}$, the zeros of $\Delta^{\Pi}_k(\la)$ and $\Delta^K_k(\la)$ coincide with the eigenvalues of the Sturm-Liouville problems \eqref{eqv} ($j = \overline{1,m}$), \eqref{bck}, \eqref{bcT} and \eqref{eqv} ($j = \overline{1,m}$), \eqref{bck}, \eqref{Kirch}, respectively.
\end{itemize}

In the above, the coincidence of eigenvalues and zeros of entire functions is understood with multiplicities. The upper indices $\Pi$ and $K$ mean ``product'' and ``Kirchhoff'', respectively.

It was shown in \cite{Yur08} that the characteristic functions $\Delta(\la)$ and $\Delta_k(\la)$ ($k = \overline{1,m}$) can be uniquely recovered their zeros (i.e. from the corresponding eigenvalues) as infinite products by Hadamard's Factorization Theorem. Consequently, Inverse Problem~\ref{ip:eigen} can be equivalently reformulated as follows:

\begin{ip} \label{ip:main}
Given the characteristic functions $\Delta(\la)$, $\Delta_k(\la)$ ($k = \overline{1,m}$) and the signs $\{ \sigma_n \}_1^{\infty}$, find the potentials $\{ q_j \}_0^m$.    
\end{ip}

Note that the eigenvalues in the spectra $\Lambda$ and $\Lambda_k$ ($k = \overline{1,m}$) can be multiple and/or arbitrarily close to one another. Therefore, it will be more convenient for us to study the stability of Inverse Problem~\ref{ip:main} in terms of the characteristic functions.

Denote by $\mathcal L^0$ and $\mathcal L_k^0$ ($k = \overline{1,m}$) the eigenvalue problems $\mathcal L$ and $\mathcal L_k$ ($k = \overline{1,m}$), respectively, with the zero potentials: $q_j^0 \equiv 0$, $j = \overline{0,m}$. The corresponding solutions and characteristic functions can be found explicitly:
\begin{align} \nonumber
S_j^0(x_j, \la) & = \frac{\sin \rho x_j}{\rho}, \quad C_j^0(x_j, \la) = \cos \rho x_j, \\ \label{defDelta0}
\Delta^0(\la) & = \bigl( (a + a^{-1}) \cos \rho T_0 - 2 \bigr) \prod_{j = 1}^m \frac{\sin \rho T_j}{\rho} + a \sum_{r = 1}^m \cos \rho T_r \prod_{\substack{j = 0 \\ j \ne r}}^m \frac{\sin \rho T_j}{\rho}, \\ \nonumber
\Delta^0_k(\la) & = \bigl( (a + a^{-1}) \cos \rho T_0 - 2 \bigr) \cos \rho T_k \prod_{\substack{j = 1 \\ j \ne k}}^m \frac{\sin \rho T_j}{\rho} \\ \label{defDelta0k} & + a \left( - \rho \sin \rho T_k \prod_{\substack{j = 0 \\ j \ne k}}^m \frac{\sin \rho T_j}{\rho} + \cos \rho T_k \sum_{\substack{r = 1 \\ r \ne k}}^m \cos \rho T_r \prod_{\substack{j = 0 \\ j \ne r, j \ne k}}^m \frac{\sin \rho T_j}{\rho} \right), \quad k = \overline{1,m},
\end{align}
where $\la = \rho^2$.

For technical simplicity, assume that $\mathbf{q} \in L_{2,\mathbb R}^0(G)$. Then, in Section~\ref{sec:prelim}, we get the following representations:
\begin{align} \label{asymptDelta}
& \Delta(\la) = \Delta^0(\la) + \frac{\varkappa(\rho)}{\rho^{m+1}}, \\ \label{asymptDeltak}
& \Delta_k(\la) = \Delta_k^0(\la) + \frac{\varkappa_k(\rho)}{\rho^m}, \quad k = \overline{1,m},
\end{align}
where $\varkappa(\rho)$ and $\varkappa_k(\rho)$ are functions of the class $PW(\mathbf{T})$, $\mathbf{T} := \sum\limits_{j = 0}^m T_j$. 

By considering small perturbations of the remainder terms in \eqref{asymptDelta} and \eqref{asymptDeltak}, we obtain the following theorem on the local stability of Inverse Problem~\ref{ip:main}:

\begin{thm} \label{thm:loc}
Let $\mathcal L = \mathcal L(\mathbf{q})$ be a boundary value problem of form \eqref{eqv}--\eqref{mc} with $\mathbf{q} \in L^0_{2,\mathbb R}(G)$. Then, there exists $\eps > 0$ (depending on $\mathcal L$) such that, for any $\tilde{\mathbf{q}} \in L_{2,\mathbb R}^0(G)$ satisfying the requirements $\sigma_n = \tilde \sigma_n$ for all $n \ge 1$ and
\begin{equation} \label{difDe}
\de := \bigl\| \rho^{m+1} (\Delta - \tilde \Delta)(\rho^2) \|_{L_2(\mathbb R)} + 
\sum_{k = 1}^m \bigl\| \rho^m (\Delta_k - \tilde \Delta_k)(\rho^2)\bigr\|_{L_2(\mathbb R)} \le \eps,
\end{equation}
the following estimates hold:
\begin{equation} \label{estqj}
\| q_j - \tilde q_j\|_{L_2(0, T_j)} \le C \de, \quad j = \overline{0,m},
\end{equation}
where $C$ is a constant depending only on $\mathcal L$.
\end{thm}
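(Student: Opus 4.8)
The plan is to prove \eqref{estqj} by quantifying the stability of the three-step reconstruction algorithm of \cite{Yur08} and composing the resulting estimates; the common thread is that $\de$ controls the perturbation of the data fed into each step, while each inverse map (data $\mapsto$ potential, or characteristic functions $\mapsto$ sub-characteristic functions) is locally Lipschitz. All the analysis rests on the bilinear structure \eqref{defDelta}--\eqref{defDeltak}, which displays $\Delta$ and $\Delta_k$ as combinations of the loop data $(d(\la),h(\la))$ and the boundary-edge data $\{S_j(T_j,\la),S_j'(T_j,\la),C_j(T_j,\la),C_j'(T_j,\la)\}_{j=1}^m$, together with the representations \eqref{asymptDelta}--\eqref{asymptDeltak} and Plancherel's identity \eqref{plan}, which convert the weighted $L_2$-norm $\de$ into control of the Paley--Wiener remainders.

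First I would recover the boundary potentials $\{q_j\}_1^m$. The spectral data of each pendant edge $e_k$ (equivalently, a Weyl-type function, or two spectra and norming constants) is extracted from $\Delta$ and $\Delta_k$ by the procedure of \cite{Yur08}; the point is that a perturbation of $\Delta,\Delta_k$ bounded by $\de$ produces a perturbation of this edge data of the same order, uniformly in the relevant norms. Reformulating the reconstruction through the Gelfand--Levitan equation on $[0,T_k]$, I would then invoke its local solvability and stability to obtain $\|q_k-\tilde q_k\|_{L_2(0,T_k)}\le C\de$ for each $k=\overline{1,m}$.

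Second comes the transition to the loop. Once $\{q_j\}_1^m$ are known up to $O(\de)$, the edge functions $S_j,S_j',C_j,C_j'$ and hence $\Delta^\Pi,\Delta^K,\Delta^\Pi_k,\Delta^K_k$ are known up to $O(\de)$, because these depend Lipschitz-continuously on the potentials (via the standard transformation-operator estimates). Eliminating the loop data from \eqref{defDelta}--\eqref{defDeltak} yields, using the Wronskian identity $C_kS_k'-C_k'S_k\equiv1$, the closed relation
\begin{equation*}
C_k(T_k,\la)\,\Delta(\la)-S_k(T_k,\la)\,\Delta_k(\la)=a\,h(\la)\prod_{\substack{j=1\\ j\ne k}}^m S_j(T_j,\la),
\end{equation*}
and a companion expression for $d(\la)$; these recover $h(\la)=S_0(T_0,\la)$ and the discriminant $d(\la)$. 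At a zero $\la_n$ of $h$ the Wronskian forces $C_0(T_0,\la_n)S_0'(T_0,\la_n)=1$, whence $d(\la_n)^2-H(\la_n)^2=4$ and therefore
\begin{equation*}
H(\la_n)=\sigma_n\sqrt{d(\la_n)^2-4}\,;
\end{equation*}
thus the hypothesis $\sigma_n=\tilde\sigma_n$ guarantees that $H$ and $\tilde H$ are reconstructed with matching signs, so that $\|H-\tilde H\|$ stays $O(\de)$. This assembles precisely the data of the quasiperiodic inverse problem on $e_0$.

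Third, I would apply the stability theorem of \cite{Bond25-quasi} for the quasiperiodic inverse Sturm--Liouville problem to conclude $\|q_0-\tilde q_0\|_{L_2(0,T_0)}\le C\de$, which together with the boundary estimates gives \eqref{estqj}. I expect the transition step to be the main obstacle: the loop is screened behind all $m$ boundary edges, so recovering $(d,h)$ requires dividing by the products $\prod_{j\ne k}S_j(T_j,\la)$ (and by $\Delta^\Pi$), whose zeros are the edge Dirichlet spectra and must be neutralized by exact cancellations in the numerators. Making this quantitative --- controlling the recovered $(d,h)$ uniformly in $\la$ in spite of these near-degeneracies, and verifying that the induced perturbation meets the hypotheses of the quasiperiodic stability theorem --- is where the local smallness of $\de$ is genuinely used and where the heart of the argument lies.
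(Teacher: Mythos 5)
Your proposal follows the same three-step route as the paper: recovery of $\{q_j\}_1^m$ via a Gelfand--Levitan-type equation built from the Weyl functions $M_k=-\Delta_k/\Delta$, elimination of the boundary-edge data to obtain $d(\la)$ and $h(\la)$, and an appeal to the quasiperiodic stability theorem of \cite{Bond25-quasi}. Your identity $C_k(T_k,\la)\Delta(\la)-S_k(T_k,\la)\Delta_k(\la)=a h(\la)\prod_{j\ne k}S_j(T_j,\la)$ is correct and is equivalent to the Cramer's-rule formulas \eqref{cram} used in Section~\ref{sec:transit}, and your observation that $d^2-H^2=4$ at the zeros of $h$ correctly explains how the signs $\sigma_n$ enter.

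There is, however, a genuine gap precisely at the point you yourself defer as ``the heart of the argument'': you give no mechanism for dividing by $\prod_{j\ne k}S_j(T_j,\la)$ (equivalently by $E(\la)=-\bigl(\prod_{j=2}^m S_j(T_j,\la)\bigr)^2$, see \eqref{Esimp}) while retaining quantitative control, even though this product vanishes on the real Dirichlet spectra of the boundary edges and the cancellations in the numerator are only approximate after perturbation. The paper does \emph{not} control the recovered $(d,h)$ uniformly in $\la$; instead it samples the linear system at the shifted lattice $\nu_n=\pi n+i\al$ of \eqref{defnu}, chosen with $\al$ large so that $S_j(T_j,\nu_n^2)\asymp\nu_n^{-1}$ is bounded away from zero, and then uses the Riesz-basis property of $\{\exp(i\nu_n t)\}$ in $L_2(-1,1)$ together with Lemmas~\ref{lem:bess} and~\ref{lem:PW} to convert the $l_2$-perturbation of the sampled values $\{d(\nu_n^2),h(\nu_n^2)\}$ into $L_2$-perturbations of the kernels $D$ and $K$ in \eqref{intd}--\eqref{inth}. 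Without this device (or an equivalent one), your second step does not close. A smaller omission of the same kind: Proposition~\ref{prop:quasi} requires the $l_2$-weighted perturbation $\|\{n\hat\rho_n\}\|_{l_2}$ of the Dirichlet eigenvalues, not $\|\hat K\|_{L_2}$, and the passage between the two is a separate Rouch\'e-type argument (Lemma~\ref{lem:stabla}) that your sketch does not mention.
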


According to Remark~\ref{rem:nonemp} below, for each fixed problem $\mathcal L$ and every $\eps > 0$, there exist infinitely many potential vectors $\tilde{\mathbf q}$ satisfying the requirements in Theorem~\ref{thm:loc}, so the estimates \eqref{estqj} hold for a non-empty set of potentials.

The proof of Theorem~\ref{thm:loc} is based on the following constructive algorithm for solving Inverse Problem~\ref{ip:main}:

\begin{alg} \label{alg:main}
Suppose that the characteristic functions $\Delta(\la)$, $\Delta_k(\la)$ ($k = \overline{1,m}$), and the signs $\{ \sigma_n \}_{n \ge 1}$ are given. We have to find the potentials $\{ q_j \}_0^m$.   
\begin{enumerate}
\item For each $k = \overline{1,m}$, recover $q_k$ from $\Delta(\la)$ and $\Delta_k(\la)$.
\item Using $\{ q_j \}_1^m$, $\Delta(\la)$, and $\Delta_1(\la)$, find the Hill-type discriminant $d(\la)$ and the Dirichlet eigenvalues $\{ \la_n \}_1^{\infty}$ (i.e. the zeros of $h(\la)$).
\item Solve the quasiperiodic problem on the loop: given $d(\la)$, $\{ \la_n \}_1^{\infty}$, and $\{ \sigma_n \}_1^{\infty}$, recover $q_0$.
\end{enumerate}
\end{alg}

The basic strategy of Algorithm~\ref{alg:main} coincides with \cite[Algorithm~2]{Yur08}. However, the implementation details of steps 1--2 of our algorithm are different, since we focus on proving the stability of the reconstruction. In particular, for the recovery of each potential $q_k$ at step~1, we use the Gelfand-Levitan-type equation, which is derived in Section~\ref{sec:bound}, in contrast to the method of spectral mappings in \cite{Yur08}. Our crucial idea at step~2 consists in considering the linear system \eqref{defDelta}--\eqref{defDeltak} at specific points $\{ \nu_n \}$ that generate a Riesz basis $\{ \exp(i \nu_n t) \}$ in $L_2(-T_0, T_0)$ (see Section~\ref{sec:transit} for details). At step~3, we rely on the results of \cite{Bond25-quasi} for the quasiperiodic problem.

Theorem~\ref{thm:loc} has a local nature. Additionally, we study the uniform stability of the inverse problem for $\mathbf{q}$ in the ball $B_Q$ defined by \eqref{BQ} and obtain the following result.

\begin{thm} \label{thm:uni}
Let $Q > 0$ be fixed and $a \in \mathbb R \setminus \{ 0\}$. Then, for any $\mathbf{q}$ and $\tilde{\mathbf{q}}$ in $B_Q$, there holds
\begin{equation} \label{quni}
\| q_k - \tilde q_k \|_{L_2(0,T_k)} \le C \left( \| \rho^{m+1} (\Delta - \tilde \Delta)(\rho^2) \|_{L_2(\mathbb R)} + \| \rho^m (\Delta_k - \tilde \Delta_k)(\rho^2) \|_{L_2(\mathbb R)} \right)
\end{equation}
for $k = \overline{1,m}$, where the constant $C$ depends only on $Q$.
\end{thm}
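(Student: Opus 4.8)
The plan is to establish the stability bound \eqref{quni} for each boundary potential $q_k$ uniformly over the ball $B_Q$, following the spirit of step~1 of Algorithm~\ref{alg:main} but now tracking how the constants depend only on $Q$. The key point is that this is a \emph{partial} inverse problem: to recover $q_k$ we need only the two characteristic functions $\Delta(\la)$ and $\Delta_k(\la)$, and the other potentials enter merely through bounded factors. First I would extract from $\Delta$ and $\Delta_k$ an appropriate quantity that isolates the data of the edge $e_k$. From \eqref{defDelta}--\eqref{defDeltak} one has
\begin{equation} \label{eq:comb}
\Delta_k(\la) - \frac{C_k(T_k,\la)}{S_k(T_k,\la)} \Delta(\la)
= a h(\la) \left( \Delta_k^K(\la) - \frac{C_k(T_k,\la)}{S_k(T_k,\la)} \Delta^K(\la) \right),
\end{equation}
and using \eqref{defDPK1}--\eqref{defDPK3} the bracket on the right telescopes to a single Wronskian-type term $\bigl(C_k'(T_k,\la) S_k(T_k,\la) - C_k(T_k,\la) S_k'(T_k,\la)\bigr) \prod_{j \ne k} S_j(T_j,\la) / S_k(T_k,\la)$, whose Wronskian factor equals $-1$. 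The upshot is that a suitable linear combination of $\Delta$ and $\Delta_k$ returns essentially $C_k(T_k,\la)$ and $S_k(T_k,\la)$ up to the common factors $a h(\la) \prod_{j \ne k} S_j(T_j,\la)$, which are \emph{independent} of $q_k$.

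Next I would pass to the Weyl-type function $M_k(\la) := C_k(T_k,\la)/S_k(T_k,\la)$ (equivalently, the $m$-function of the edge $e_k$ with the boundary conditions determined by $\Delta_k$ versus $\Delta$), since $M_k$ together with $S_k(T_k,\la)$ determines $q_k$ via the classical half-line inverse theory on a single interval. The crucial estimate is that the map from $\Delta, \Delta_k$ to the pair $\bigl(S_k(T_k,\cdot),\, C_k(T_k,\cdot)\bigr)$ is Lipschitz with a constant depending only on $Q$: the common factor $a h(\la) \prod_{j\ne k} S_j(T_j,\la)$ is bounded below in the appropriate weighted $L_2$ sense because for $\mathbf q \in B_Q$ the functions $S_j(T_j,\la)$ obey uniform asymptotics $S_j(T_j,\la)=\sin\rho T_j/\rho+O(\cdot)$ with remainders controlled by $Q$ (this is exactly the content of the Paley--Wiener representations behind \eqref{asymptDelta}--\eqref{asymptDeltak}, now made uniform over $B_Q$). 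Dividing \eqref{eq:comb} and its companion by this factor transfers the weighted $L_2$ bounds on $\rho^{m+1}(\Delta-\tilde\Delta)$ and $\rho^m(\Delta_k-\tilde\Delta_k)$ into bounds on $\rho(\hat S_k)(T_k,\cdot)$ and $(\hat C_k)(T_k,\cdot)$ in $L_2(\mathbb R)$.

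Finally I would invoke the uniform stability of the inverse Sturm--Liouville problem on a finite interval, as in Savchuk--Shkalikov \cite{SS10} (cf. the discussion in \cite{Bond25-matr}): for potentials in a fixed $L_2$-ball, the potential $q_k$ depends Lipschitz-continuously on the two characteristic functions $S_k(T_k,\cdot)$ and $C_k(T_k,\cdot)$ measured in the weighted norms $\|\rho\,\hat S_k\|_{L_2(\mathbb R)}$ and $\|\hat C_k\|_{L_2(\mathbb R)}$, with constant depending only on $Q$. Chaining this with the previous step yields \eqref{quni}. The main obstacle I anticipate is the uniform lower bound on the factor $h(\la)\prod_{j\ne k}S_j(T_j,\la)$: its zeros can accumulate or nearly coincide, so one cannot divide pointwise, and the argument must be carried out globally in $L_2(\mathbb R)$ using the stability of the $\{S_j(T_j,\cdot)\}$ and a uniform Riesz-basis/Paley--Wiener control over $B_Q$ rather than pointwise estimates. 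Handling this division uniformly, and ensuring no dependence on the specific spacing of zeros creeps into the constant, is the delicate part; the single-interval inverse theory of \cite{SS10} and the Lipschitz dependence of the edge characteristic functions on the potentials (established in Section~\ref{sec:prelim}) are the tools that make it go through.
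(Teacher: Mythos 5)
Your plan diverges from the paper's proof and contains a gap that is not repairable along the lines you sketch. The paper does not attempt to extract $S_k(T_k,\cdot)$ and $C_k(T_k,\cdot)$ from $\Delta$ and $\Delta_k$ at all. Instead, it works with the graph Weyl function $M_k(\la) = -\Delta_k(\la)/\Delta(\la)$ and the reconstruction formula of the method of spectral mappings,
\begin{equation*}
\hat q(x) = \frac{1}{2\pi i}\int_{\Gamma}\bigl(S(x,\la)\tilde S(x,\la)\bigr)'\,\hat M(\la)\,d\la ,
\end{equation*}
where $\Gamma$ is the image of the horizontal line $\gamma=\{\rho=\sigma+i\tau\}$. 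The whole point of that choice is that on $\gamma$ one has the two-sided estimate $\Delta(\rho^2)\asymp\rho^{-m}$ with constants depending only on $Q$ (Corollary~\ref{cor:stabDelta} gives a uniform lower bound for the spectra, so $\tau$ can be fixed uniformly over $B_Q$), hence $\|\hat M(\rho^2)\|_{L_2(\gamma)}\le C\de_k$ follows from Lemma~\ref{lem:hatM} without ever dividing by a function that vanishes. The estimate \eqref{quni} then drops out by splitting the contour integral into a Fourier-transform main term and a uniformly convergent remainder.

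Your route has two concrete failures. First, the division by $a h(\la)\prod_{j\ne k}S_j(T_j,\la)$ (and by $S_k(T_k,\la)$) is performed where the weighted norms live, namely on the real $\rho$-axis, and all of these functions have infinitely many real zeros; "dividing globally in $L_2(\mathbb R)$" is not an operation, and you correctly flag this as the delicate point but offer no mechanism to carry it out. The paper's device --- moving to a contour with $\mathrm{Im}\,\rho=\tau$ where the denominator is uniformly bounded below --- is exactly the missing idea. Second, even granting the division, the identity $S_k\Delta_k - C_k\Delta = -a h\prod_{j\ne k}S_j$ is a single scalar relation; it cannot deliver $\hat S_k(T_k,\cdot)$ and $\hat C_k(T_k,\cdot)$ separately, and the right-hand side depends on $h$ and the other $S_j$, i.e.\ on the remaining unknown potentials (including $q_0$), whose perturbations are not controlled by the right-hand side of \eqref{quni}. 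So the inputs to your final Savchuk--Shkalikov step are never produced. The correct object to stabilize is the ratio $-\Delta_k/\Delta$ itself, not the individual edge characteristic functions.
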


Note that the cases $a = \pm 1$ correspond to the periodic and antiperiodic problems on the loop (see the boundary conditions \eqref{bcquasi}). In Theorem~\ref{thm:loc}, we exclude these cases, since we rely on the quasiperiodic inverse spectral theory from \cite{Bond25-quasi}, which is specific for $a \ne \pm 1$. However, a number of results of this paper, especially those concerning the boundary edges (in particular, Theorem~\ref{thm:uni}), are also valid for $a = \pm 1$.

\section{Preliminaries} \label{sec:prelim}

In this section, we provide auxiliary representations of the characteristic functions that participate in formulas \eqref{defDelta} and \eqref{defDeltak}. Furthermore, we establish the Lipschitz continuity of those functions with respect to the corresponding potentials and present other preliminaries.

\begin{prop}[\hspace*{-3pt}\cite{Mar77}, proof of Lemma~1.3.2] \label{prop:SC}
Under the condition $q_j \in L_{2,\mathbb R}^0(0,T_j)$, the following relations hold:
\begin{align} \label{intS}
& S_j(T_j,\la) = \frac{\sin \rho T_j}{\rho} + \frac{\kappa_{s,j}(\rho)}{\rho^2}, \quad S_j'(T_j,\la) = \cos \rho T_j + \frac{\eta_{s,j}(\rho)}{\rho}, \\ \nonumber
& C_j(T_j,\la) = \cos \rho T_j + \frac{\kappa_{c,j}(\rho)}{\rho}, \quad
C_j'(T_j,\la) = -\rho \sin \rho T_j + \eta_{c,j}(\rho),
\end{align}
where $\kappa_{s,j}$, $\eta_{s,j}$, $\kappa_{c,j}$, and $\eta_{c,j}$ are functions of $PW(T_j)$, $j = \overline{0,m}$.
\end{prop}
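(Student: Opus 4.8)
The plan is to obtain all four asymptotics from the transformation (transmutation) operator representations of $C_j$ and $S_j$, combined with one integration by parts that exploits the zero-mean hypothesis $q_j \in L_{2,\mathbb R}^0(0,T_j)$. Since the statement is quoted from \cite{Mar77}, the essential analytic input is the $L_2$-regularity of the transformation kernels, which I would take from that source; the goal of the sketch is to show how the four stated forms drop out of it. First I would write
$$
C_j(x_j, \la) = \cos \rho x_j + \int_0^{x_j} A_j(x_j, t) \cos \rho t\, dt, \qquad
\rho\, S_j(x_j, \la) = \sin \rho x_j + \int_0^{x_j} B_j(x_j, t) \sin \rho t\, dt,
$$
where, for $q_j \in L_2(0,T_j)$, the kernels $A_j, B_j$ and their first-order partial derivatives in $t$ (along $x_j = T_j$) and in $x_j$ (on the diagonal) are square-integrable, the diagonal values satisfy $A_j(x_j,x_j) = B_j(x_j,x_j) = \tfrac12 \int_0^{x_j} q_j(s)\, ds$, and $B_j(x_j,0) = 0$. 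The point of the zero-mean condition is exactly that it forces $A_j(T_j,T_j) = B_j(T_j,T_j) = 0$.

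The four formulas then follow by elementary manipulation at $x_j = T_j$. For $C_j(T_j,\la)$ I would integrate by parts in the kernel integral: the boundary contribution at $t = 0$ vanishes because of the factor $\sin\rho t$, and the contribution at $t = T_j$ equals $A_j(T_j,T_j)\frac{\sin\rho T_j}{\rho}$, which vanishes by zero mean, leaving $\cos\rho T_j + \frac1\rho \int_0^{T_j}\!\bigl(-\partial_t A_j(T_j,t)\bigr)\sin\rho t\, dt$, i.e. the term $\kappa_{c,j}(\rho)/\rho$. For $\rho\, S_j(T_j,\la)$ a single integration by parts produces $\sin\rho T_j - B_j(T_j,T_j)\frac{\cos\rho T_j}{\rho} + B_j(T_j,0)\frac1\rho + \frac1\rho\int_0^{T_j}\partial_t B_j(T_j,t)\cos\rho t\, dt$; the two middle terms drop out (by zero mean and $B_j(T_j,0)=0$, respectively), and dividing by $\rho$ yields the $\kappa_{s,j}(\rho)/\rho^2$ remainder. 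Finally, differentiating the two representations in $x_j$ (Leibniz rule, no integration by parts) and setting $x_j = T_j$ generates the diagonal terms $A_j(T_j,T_j)\cos\rho T_j$ and $B_j(T_j,T_j)\frac{\sin\rho T_j}{\rho}$, which again vanish, leaving the remainders $\eta_{c,j}(\rho) = \int_0^{T_j}\partial_x A_j(T_j,t)\cos\rho t\, dt$ for $C_j'$ and $\eta_{s,j}(\rho)/\rho$ with $\eta_{s,j}(\rho) = \int_0^{T_j}\partial_x B_j(T_j,t)\sin\rho t\, dt$ for $S_j'$.

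It then remains to identify each remainder integral as an element of $PW(T_j)$. For this I would extend the relevant kernel trace from $(0,T_j)$ to $(-T_j,T_j)$ by an even reflection for the $\cos$-integrals and an odd reflection for the $\sin$-integrals; each integral $\int_0^{T_j} g(t)\cos\rho t\, dt$ or $\int_0^{T_j} g(t)\sin\rho t\, dt$ then rewrites as $\int_{-T_j}^{T_j} f(t)\, e^{i\rho t}\, dt$ with $f \in L_2(-T_j,T_j)$, which is precisely the definition \eqref{PW} of $PW(T_j)$. The only genuinely substantive point — and the main obstacle — is justifying that the traces $\partial_t A_j(T_j,\cdot)$, $\partial_t B_j(T_j,\cdot)$, $\partial_x A_j(T_j,\cdot)$, $\partial_x B_j(T_j,\cdot)$ are square-integrable (and that $B_j(T_j,0)=0$): without the $L_2$-regularity of the transformation kernels the integration-by-parts and differentiation steps would not deliver $L_2$ densities, and a nonzero value $B_j(T_j,0)$ would contribute a constant term not belonging to $PW(T_j)$. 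This regularity is exactly what the cited Marchenko lemma supplies, so I would invoke it rather than rederive the kernel theory.
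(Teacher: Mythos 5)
Your argument is correct and coincides with the standard transformation-operator proof that the paper delegates to Marchenko (proof of Lemma~1.3.2): the representations with $L_2$ kernels, one integration by parts, the vanishing of the diagonal traces under the zero-mean condition (together with $K(x,0)=0$ for the sine kernel), and the even/odd reflection to land in $PW(T_j)$. No gaps.
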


In particular, the relations \eqref{intS} imply the following standard asymptotics as $|\rho| \to \infty$ (see, e.g., \cite[Section~1.1]{FY01}):
\begin{align} \label{asymptS}
& S_j(x, \la) = \frac{\sin \rho x}{\rho} + O\bigl( |\rho|^{-2} \exp(|\mbox{Im} \, \rho| x)\bigr), \\ \label{asymptSp}
& S'_j(x, \la) = \cos \rho x + O\bigl( |\rho|^{-1} \exp(|\mbox{Im}\rho| x)\bigr).
\end{align}

\begin{prop}[\hspace*{-3pt}\cite{FY01}, Section 1.1] \label{prop:StL}
Let $j \in \{ 0, 1, \dots, m \}$ be fixed and $q_j \in L_{2,\mathbb R}^0(0,T_j)$. Then, the following assertions hold: 

(i) The spectrum Sturm-Liouville problem for equation \eqref{eqv} with the boundary conditions $y_j(0) = y_j(T_j) = 0$ is a countable set of real and simple eigenvalues $\{ \la_{n,j} \}_{n = 1}^{\infty}$, which coincide with the zeros of the characteristic function $S_j(T_j,\la)$ and have the asymptotics
\begin{equation} \label{asymptlanj}
\sqrt{\la_{n,j}} = \frac{\pi n}{T_j} + \frac{\varsigma_{n,j}}{n}, \quad \{ \varsigma_{n,j} \} \in l_2, \quad n \ge 1.
\end{equation}

(ii) For every $Q > 0$ and $\| q_j \|_{L_2(0,T_j)} \le Q$, there holds $\| \{ \varsigma_{n,j} \}_{n = 1}^{\infty} \|_{l_2} \le C$, where the constant $C$ depends only on $Q$. In particular, the spectrum $\{ \la_{n,j} \}_{n = 1}^{\infty}$ is bounded from below by a constant that depends only on $Q$.

(iii) The eigenfunctions $\{ S_j(x_j, \la_{n,j}) \}_{n = 1}^{\infty}$ form an orthogonal basis in $L_2(0, T_j)$.

(iv) The zeros of the functions $S_j'(T_j,\la)$, $C_j(T_j,\la)$, and $C_j'(T_j,\la)$ are real, simple, and coincide with the eigenvalues of the Sturm-Liouville problems for equation \eqref{eqv} under the boundary conditions $y_j(0) = y_j'(T_j) = 0$, $y_j'(0) = y_j(T_j) = 0$, and $y_j'(0) = y_j'(T_j) = 0$, respectively.
\end{prop}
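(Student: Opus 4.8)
The plan is to reduce all four assertions to elementary facts about the second-order equation \eqref{eqv} combined with the asymptotic representations of Proposition~\ref{prop:SC}. For (i), any solution of \eqref{eqv} vanishing at $x_j = 0$ is a scalar multiple of $S_j(\cdot,\la)$, so $\la$ is a Dirichlet eigenvalue if and only if $S_j(T_j,\la) = 0$; since the space of solutions with $y_j(0)=0$ is one-dimensional, each eigenvalue is geometrically simple, reality follows from self-adjointness of the Dirichlet problem in $L_2(0,T_j)$, discreteness from compactness of the resolvent, and algebraic simplicity is classical for separated boundary conditions. Assertion (iv) is identical in spirit: a solution with $y_j(0)=0$ (resp. $y_j'(0)=0$) is a multiple of $S_j$ (resp. $C_j$), so the zeros of $S_j'(T_j,\la)$, $C_j(T_j,\la)$, $C_j'(T_j,\la)$ are the eigenvalues of the self-adjoint problems with boundary conditions $y_j(0)=y_j'(T_j)=0$, $y_j'(0)=y_j(T_j)=0$, $y_j'(0)=y_j'(T_j)=0$, respectively, whence reality and simplicity again.

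Next I would derive the asymptotics in (i). By \eqref{intS} the zeros of $S_j(T_j,\la)$ coincide with those of $\rho S_j(T_j,\la) = \sin\rho T_j + \rho^{-1}\kappa_{s,j}(\rho)$. Writing a prospective zero as $\rho_n = \frac{\pi n}{T_j} + \tau_n$ and substituting gives $(-1)^n\sin(T_j\tau_n) = -\rho_n^{-1}\kappa_{s,j}(\rho_n)$, so that $\tau_n = O(n^{-1})$ and, more precisely, $n\tau_n \asymp \kappa_{s,j}(\rho_n)$ up to bounded factors. Since $\kappa_{s,j}\in PW(T_j)$ is the Fourier image of an $L_2(-T_j,T_j)$ function, its samples on the grid $\{\pi n/T_j\}$ form an $l_2$ sequence by Plancherel's identity \eqref{plan} for the orthogonal system $\{\exp(i\pi n t/T_j)\}$, and a Lipschitz perturbation transfers this to the samples at $\rho_n$; this yields $\varsigma_{n,j} := n\tau_n$ with $\{\varsigma_{n,j}\}\in l_2$. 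A Rouch\'e argument on circles of radius $\pi/(2T_j)$ about $\pi n/T_j$ confirms that these are all the zeros and that they are simple for large $n$.

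The same computation gives $\|\{\varsigma_{n,j}\}\|_{l_2}\le C\|\kappa_{s,j}\|_{L_2(\mathbb R)}$, and $\|\kappa_{s,j}\|_{L_2}$ is controlled by $\|q_j\|_{L_2(0,T_j)}$ through the transformation-operator representation underlying Proposition~\ref{prop:SC}, which makes the bound uniform for $\|q_j\|\le Q$; this proves (ii), while the lower bound on the spectrum follows from the variational principle together with the interpolation estimate $\int_0^{T_j} q_j|y|^2 \ge -\|q_j\|_{L_2}\|y\|_{L_4}^2 \ge -\tfrac12\|y'\|_{L_2}^2 - C(Q)\|y\|_{L_2}^2$, giving $\la_{1,j}\ge -C(Q)$. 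For (iii), eigenfunctions of the self-adjoint Dirichlet problem corresponding to distinct eigenvalues are $L_2$-orthogonal by the Lagrange identity, and compactness of the resolvent makes the normalized eigenfunctions complete; hence $\{S_j(x_j,\la_{n,j})\}$ is an orthogonal basis of $L_2(0,T_j)$.

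The main obstacle is the $l_2$-asymptotics in (i) together with its uniform form (ii): the reality, simplicity, and basis statements are routine self-adjoint spectral theory, whereas the precise $l_2$ decay — and in particular controlling $\|\kappa_{s,j}\|_{L_2}$ and the passage from the grid $\pi n/T_j$ to the true zeros $\rho_n$ uniformly in $\|q_j\|\le Q$ — is where the quantitative effort concentrates.
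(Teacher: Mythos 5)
The paper states this proposition without proof, simply citing \cite{FY01}, Section~1.1; your argument reconstructs the standard proof given there — reduction to the zeros of the characteristic functions $S_j(T_j,\la)$, $S_j'(T_j,\la)$, $C_j(T_j,\la)$, $C_j'(T_j,\la)$, reality/simplicity/completeness from self-adjointness with separated boundary conditions, and the $l_2$-asymptotics via the transformation-operator (Paley--Wiener) representation of Proposition~\ref{prop:SC} combined with a Rouch\'e count — and is correct as a sketch, including the uniform bound in the ball $\|q_j\|_{L_2}\le Q$. No genuinely different route is taken.
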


Substituting the representations of Proposition~\ref{prop:SC} into the relations \eqref{defd}--\eqref{defDPK3}, we get the following corollary.

\begin{cor} \label{cor:DPK}
The following relations hold:
\begin{align} \label{intdPW}
& d(\la) = (a + a^{-1}) \cos \rho T_0 + \frac{\varkappa^d(\rho)}{\rho}, \\ \nonumber
& \Delta^{\Pi}(\la) = \rho^{-m} \prod_{j = 1}^m \sin \rho T_j + \frac{\varkappa^{\Pi}(\rho)}{\rho^{m+1}}, \\ \nonumber
& \Delta^K(\la) = \rho^{-(m-1)} \sum_{r = 1}^m \cos \rho T_r \prod_{\substack{j = 1 \\ j \ne r}}^m \sin \rho T_j + \frac{\varkappa^K(\rho)}{\rho^m}, \\ \nonumber
& \Delta_k^{\Pi}(\la) = \rho^{-(m-1)} \cos \rho T_k \prod_{\substack{j = 1 \\ j \ne k}}^m \sin \rho T_j + \frac{\varkappa_k^{\Pi}(\rho)}{\rho^m}, \\ \nonumber
& \Delta_k^K(\la) = \rho^{-(m-2)} \left( -\sin \rho T_k \prod_{\substack{j = 1 \\ j \ne k}}^m \sin \rho T_j + \cos \rho T_k \sum_{\substack{r = 1 \\ r \ne k}}^m \cos \rho T_r \prod_{\substack{j = 1 \\ j \ne r, \, j \ne k}}^m \sin \rho T_j \right) + \frac{\varkappa_k^K(\rho)}{\rho^{m-1}},
\end{align}
where $\varkappa^d \in PW(T_0)$, the functions $\varkappa^{\Pi}$, $\varkappa^K$, $\varkappa_k^{\Pi}$, and $\varkappa_k^K$ belong to $PW(\mathbf{T} - T_0)$, $k = \overline{1,m}$.
\end{cor}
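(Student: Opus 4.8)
The plan is to prove Corollary~\ref{cor:DPK} by direct substitution of the representations from Proposition~\ref{prop:SC} into the definitions \eqref{defd}--\eqref{defDPK3}, collecting the leading trigonometric terms and verifying that every remainder term is, after factoring out the appropriate power of $\rho$, a Paley-Wiener function of the required exponential type. The key observation organizing the whole computation is that the class $PW(T)$ behaves well under the operations that appear here: it is closed under addition, and the product of a function in $PW(T_1)$ with a function in $PW(T_2)$ lies in $PW(T_1 + T_2)$ (this is the Fourier-analytic statement that convolution of $L_2(-T_1,T_1)$ and $L_2(-T_2,T_2)$ lands in $L_2(-(T_1+T_2), T_1+T_2)$, together with boundedness of one factor where needed). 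Moreover, each elementary trigonometric factor $\cos\rho T_j$ or $\sin \rho T_j$ is itself a (bounded) element of $PW(T_j)$, so multiplying a remainder by such a factor raises its exponential type by exactly $T_j$.

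First I would treat $d(\la)$: substituting $C_0(T_0,\la) = \cos\rho T_0 + \kappa_{c,0}(\rho)/\rho$ and $S_0'(T_0,\la) = \cos\rho T_0 + \eta_{s,0}(\rho)/\rho$ into \eqref{defd} immediately gives the leading term $(a+a^{-1})\cos\rho T_0$ and the remainder $\varkappa^d(\rho)/\rho$ with $\varkappa^d := a\kappa_{c,0} + a^{-1}\eta_{s,0} \in PW(T_0)$. Next I would handle the products $\Delta^\Pi$ and $\Delta_k^\Pi$: here I expand $\prod_{j} S_j(T_j,\la) = \prod_j \bigl( \tfrac{\sin\rho T_j}{\rho} + \tfrac{\kappa_{s,j}(\rho)}{\rho^2}\bigr)$, pull out the overall factor $\rho^{-m}$ (respectively $\rho^{-(m-1)}$ once one factor $S_k$ is replaced by $C_k = \cos\rho T_k + \kappa_{c,k}/\rho$), and separate the single all-leading term from the sum of mixed products. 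In each mixed product at least one factor carries an extra power of $\rho^{-1}$ and a Paley-Wiener remainder, so the whole correction is $O(\rho^{-1})$ relative to the leading term and, by the product rule for $PW$, its numerator lies in $PW(\sum_{j\neq\,\cdot} T_j) = PW(\mathbf{T}-T_0)$ (the loop length $T_0$ is absent because only the boundary-edge solutions enter these products). The sums $\Delta^K$, $\Delta_k^K$ are treated the same way, term by term over the index $r$, now also using $S_r'(T_r,\la) = \cos\rho T_r + \eta_{s,r}(\rho)/\rho$ and $C_k'(T_k,\la) = -\rho\sin\rho T_k + \eta_{c,k}(\rho)$; the extra factor of $\rho$ in $C_k'$ is exactly what produces the shifted power $\rho^{-(m-2)}$ in the $\Delta_k^K$ line, and I would check that the term $\eta_{c,k}(\rho)$ (which has no $\rho^{-1}$ decay) still contributes only to the remainder at the stated order after the global $\rho^{-(m-2)}$ is accounted for.

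The only genuinely delicate bookkeeping — and the step I expect to be the main obstacle — is tracking the exponential types and powers of $\rho$ uniformly across all the mixed products so that the final numerators land in $PW(\mathbf{T}-T_0)$ rather than a larger class. The subtlety is that multiplying a $PW$ remainder by a bounded trigonometric factor $\cos\rho T_j$ or $\sin\rho T_j$ enlarges the type by $T_j$, so one must confirm that the total type accumulated by each correction term, summed over the edges involved, never exceeds $\mathbf{T}-T_0$; the worst case is the term where the single non-leading factor has the smallest type, since the remaining leading factors then contribute the maximal trigonometric type. A clean way to present this is to note that in every correction term the sum of the individual exponential types of all $m$ (or $m-1$) factors is at most $\sum_{j=1}^m T_j = \mathbf{T}-T_0$, because each factor $S_j$, $S_j'$, $C_j$, $C_j'$ has type $T_j$ regardless of whether one takes its leading or its remainder part. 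Once this type bound is recorded, defining $\varkappa^\Pi, \varkappa^K, \varkappa_k^\Pi, \varkappa_k^K$ as the explicit (finite) sums of the corresponding correction numerators and invoking closure of $PW(\mathbf{T}-T_0)$ under addition finishes the proof.
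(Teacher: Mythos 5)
Your proposal is correct and follows exactly the paper's route: the paper proves Corollary~\ref{cor:DPK} in one sentence by substituting the representations of Proposition~\ref{prop:SC} into \eqref{defd}--\eqref{defDPK3}, which is precisely your substitution-and-bookkeeping argument. Your additional care with the closure of $PW$ classes under products (types adding to at most $\mathbf{T}-T_0$) and with the non-decaying term $\eta_{c,k}$ in $C_k'$ just makes explicit what the paper leaves implicit.
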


Substituting the representations of Proposition~\ref{prop:SC} and Corollary~\ref{cor:DPK} into \eqref{defDelta} and \eqref{defDeltak}, we arrive at the relations \eqref{asymptDelta} and \eqref{asymptDeltak}, respectively. 

Proceed to the stability of the representations in Proposition~\ref{prop:SC} and Corollary~\ref{cor:DPK}.

\begin{lem} \label{lem:stabSC}
Let $Q > 0$ and $j \in \{ 0, 1, 2, \dots, m \}$ be fixed.
Then, for any $q_j$ and $\tilde q_j$ in $L_{2, \mathbb R}^0(0,T_j)$ satisfying the conditions 
\begin{equation} \label{ballQ}
\| q_j \|_{L_2(0,T_j)} \le Q, \quad \| \tilde q_j \|_{L_2(0,T_j)} \le Q, 
\end{equation}
the following estimates hold:
\begin{align} \label{stabSj}
& \| \rho^2 \hat S_j(T_j, \rho^2) \|_{L_2(\mathbb R)} \le C \| \hat q_j \|_{L_2(0, T_j)}, \quad
\| \rho \hat S_j'(T_j, \rho^2) \|_{L_2(\mathbb R)} \le C \| \hat q_j \|_{L_2(0, T_j)}, \\ \label{stabCj}
& \| \rho \hat C_j(T_j, \rho^2) \|_{L_2(\mathbb R)} \le C \| \hat q_j \|_{L_2(0, T_j)}, \quad
\| \hat C'_j(T_j, \rho^2) \|_{L_2(\mathbb R)} \le C \| \hat q_j \|_{L_2(0, T_j)},
\end{align}
where the constant $C$ depends only on $Q$ and $j$.
\end{lem}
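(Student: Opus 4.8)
The plan is to reduce all four estimates to $L_2(-T_j,T_j)$-bounds on Paley--Wiener densities via Plancherel, and then to exploit the Volterra structure of the underlying integral equations. Since the leading trigonometric terms in \eqref{intS} do not depend on $q_j$, they cancel in the differences, so the four weighted quantities in \eqref{stabSj}--\eqref{stabCj} are exactly the differences of the Paley--Wiener remainders of Proposition~\ref{prop:SC}:
\begin{align*}
& \rho^2 \hat S_j(T_j,\rho^2) = \hat\kappa_{s,j}(\rho), \qquad \rho \hat S_j'(T_j,\rho^2) = \hat\eta_{s,j}(\rho), \\
& \rho \hat C_j(T_j,\rho^2) = \hat\kappa_{c,j}(\rho), \qquad \hat C_j'(T_j,\rho^2) = \hat\eta_{c,j}(\rho).
\end{align*}
By Plancherel's theorem \eqref{plan}, each $L_2(\mathbb R)$-norm equals $\sqrt{2\pi}$ times the $L_2(-T_j,T_j)$-norm of the corresponding density, so it suffices to show that these densities depend on $q_j$ in a Lipschitz way with constant depending only on $Q$ and $j$.

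Next I would write down the standard Volterra integral equations for $S_j$ and $C_j$, from which each remainder takes the common form $\int_0^{T_j} w(\rho(T_j - t)) q_j(t) Y(t,\rho^2)\,dt$ with $w \in \{\sin,\cos\}$ and $Y \in \{\rho S_j, C_j\}$ (the factor $Y$ being bounded on the real axis by the asymptotics \eqref{asymptS}--\eqref{asymptSp}). Subtracting the analogous expression for $\tilde{\mathcal L}$ and using $q_j Y - \tilde q_j \tilde Y = \hat q_j Y + \tilde q_j \hat Y$ splits each remainder difference into a \emph{main term}, linear in $\hat q_j$ and involving the unperturbed factor $Y$, and a \emph{self-referential term} containing $\hat Y$ with coefficient $\tilde q_j$. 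For the main term I would insert $\rho S_j = \sin\rho t + O(\rho^{-1})$ and $C_j = \cos\rho t + O(\rho^{-1})$ and apply product-to-sum identities; the leading contribution is a genuine $PW(T_j)$ function whose density is a reflection--rescaling of $\hat q_j$, so its $L_2(\mathbb R)$-norm is $\le C\|\hat q_j\|_{L_2(0,T_j)}$ by Plancherel. The point worth stressing is that in every case the product-to-sum expansion produces a resonant term of the form $(\cos\rho T_j$ or $\sin\rho T_j)\int_0^{T_j}\hat q_j$, which would fail to be square-integrable on $\mathbb R$; it vanishes precisely because $q_j,\tilde q_j \in L_{2,\mathbb R}^0(0,T_j)$ have zero mean. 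The residual $O(\rho^{-1})$ pieces are handled by Minkowski's inequality together with the $Q$-uniform bounds on $S_j$, $C_j$.

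The main work, and the step I expect to be the chief obstacle, is the self-referential term, where the difficulty is securing a constant depending only on $Q$. On the real axis $|w(\rho(T_j - t))| \le 1$, so Minkowski's integral inequality bounds its $L_2(\mathbb R)$-norm by $\int_0^{T_j} |\tilde q_j(t)|\,\|(\text{weighted }\hat Y)(t,\cdot)\|_{L_2(\mathbb R)}\,dt$. Setting $\psi(x) := \|\rho^2 \hat S_j(x,\rho^2)\|_{L_2(\mathbb R)}$ (and analogously $\|\rho\hat C_j(x,\cdot)\|$ for the cosine solution), and controlling the relevant norm by splitting $\mathbb R$ into $|\rho| \ge 1$, where the extra power of $\rho$ is harmless, and $|\rho| \le 1$, where an elementary Gronwall estimate for the integral equation on a compact $\la$-set yields an $O(\|\hat q_j\|_{L_2})$ bound, I obtain the integral inequality $\psi(x) \le C\|\hat q_j\|_{L_2} + \int_0^x |\tilde q_j(t)|\,\psi(t)\,dt$ for all $x \in [0,T_j]$. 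Gronwall's inequality, together with $\|\tilde q_j\|_{L_1} \le \sqrt{T_j}\,Q$, then gives $\psi(T_j) \le C(Q)\|\hat q_j\|_{L_2}$, which is the first estimate of \eqref{stabSj}; the estimate for $\rho\hat C_j$ in \eqref{stabCj} is identical.

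Finally, the two derivative estimates (the functions $\hat\eta_{s,j}$ and $\hat\eta_{c,j}$) require no further fixed-point argument. Their self-referential terms are again bounded, via Minkowski, by $\int_0^{T_j}|\tilde q_j(t)|\,\psi(t)\,dt$, and since the bound $\psi(t) \le C(Q)\|\hat q_j\|_{L_2}$ has already been established on all of $[0,T_j]$, these integrals are immediately $\le C(Q)\|\hat q_j\|_{L_2}$; their main terms are handled by the same product-to-sum and zero-mean cancellation as above. This yields the remaining estimates in \eqref{stabSj} and \eqref{stabCj} and completes the proof.
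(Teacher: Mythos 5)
Your route is genuinely different from the paper's. The paper does not work with the integral equations at all: it composes two known uniform-stability results, namely the Savchuk--Shkalikov estimate $\| \{ \hat \varsigma_{n,j} \} \|_{l_2} \le C \| \hat q_j \|_{L_2}$ (potential $\to$ spectrum, uniformly on balls) with Buterin's theorem on recovering sine-type functions from their zeros, which gives $\| \hat K_j \|_{L_2} \le C \| \{ \hat\varsigma_{n,j} \} \|_{l_2}$ (spectrum $\to$ Paley--Wiener density), and then handles the remaining three quantities by invoking the Dirichlet--Neumann, Neumann--Dirichlet and Neumann--Neumann analogues of these results. Your direct Volterra-plus-Gronwall argument is self-contained and avoids this machinery, which is a legitimate trade-off; your identification of the four weighted differences with the differences of the $PW(T_j)$ remainders, the Plancherel reduction, and the observation that the resonant term is killed by the zero-mean hypothesis are all correct and are exactly where the hypothesis $q_j \in L^0_{2,\mathbb R}$ enters.

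There is, however, one concrete flaw in your execution. You set $\psi(x) := \| \rho^2 \hat S_j(x,\rho^2) \|_{L_2(\mathbb R)}$ and claim the inequality $\psi(x) \le C \| \hat q_j \|_{L_2} + \int_0^x |\tilde q_j(t)| \psi(t)\,dt$ for \emph{all} $x \in [0,T_j]$. But the product-to-sum expansion of the main term at an intermediate point $x$ produces the resonant contribution
\begin{equation*}
-\frac{\cos \rho x}{2} \int_0^x \hat q_j(t)\,dt,
\end{equation*}
and the zero-mean condition only kills this at the endpoint $x = T_j$; for $0 < x < T_j$ the integral $\int_0^x \hat q_j$ is generically nonzero, the term does not decay in $\rho$, and $\psi(x) = +\infty$. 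So the Gronwall inequality as written is vacuous. The fix is routine but must be made: run the Gronwall loop on the lower-order functional $\phi(x) := \| \rho \hat S_j(x,\rho^2) \|_{L_2(\mathbb R)}$ (respectively $\| \hat C_j(x,\cdot) \|_{L_2(\mathbb R)}$ for the cosine solution), which is finite for every $x$ since $\rho \hat S_j(x,\rho^2) = O(\rho^{-1})$ on the real axis, and which is exactly the quantity that appears under the integral in the self-referential term of the equation for $\rho^2 \hat S_j(T_j,\cdot)$. Gronwall then gives $\phi(x) \le C(Q) \| \hat q_j \|_{L_2}$ on $[0,T_j]$, and the top-order estimate $\| \rho^2 \hat S_j(T_j,\rho^2) \|_{L_2(\mathbb R)} \le C(Q) \| \hat q_j \|_{L_2}$ is obtained in a single final step at $x = T_j$, where the cancellation is available. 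With this correction your argument goes through and yields all four estimates.
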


\begin{proof}
According to \eqref{intS}, we have
$$
\rho^2 \hat S_j(T_j,\rho^2) = \int_0^{T_j} \hat K_j(t) \cos \rho t \, dt, \quad \hat K_j \in L_{2,\mathbb R}(0,T_j).
$$
Under the hypothesis of this lemma, Theorem~7 in \cite{But22} implies
\begin{equation} \label{estKsi}
\| \hat K_j \|_{L_2(0,T_j)} \le C \| \{ \hat\varsigma_{n,j} \}_{n = 1}^{\infty} \|_{l_2},
\end{equation}
where $\{ \varsigma_{n,j} \}$ are the remainder terms from the asymptotics \eqref{asymptlanj}. We may assume that the constant $C$ in \eqref{estKsi} depends only on $Q$, taking part (ii) of Proposition~\ref{prop:StL} into account.
Furthermore, the estimates (2.13) of Theorem 2.12 in \cite{SS10} yield
\begin{equation} \label{estsiq}
\| \{ \hat \varsigma_{n,j} \}_{n = 1}^{\infty} \|_{l_2} \le C \| \hat q_j \|_{L_2(0,T_j)}.
\end{equation}
Combining \eqref{estKsi} and \eqref{estsiq}, we arrive at the estimate
\begin{equation} \label{estKq}
\| \hat K_j \|_{L_2(0,T_j)} \le C \| \hat q_j \|_{L_2(0,T_j)} 
\end{equation}
in the ball \eqref{ballQ}. Obviously, \eqref{estKq} implies \eqref{stabSj} for $\hat S_j$.

Note that the results of \cite{But22} are valid for a wide variety of entire functions, including $S_j'(T_j,\la)$, $C_j(T_j,\la)$, and $C_j'(T_j,\la)$. Theorem 2.12 in \cite{SS10} provides the estimates analogous to \eqref{estsiq} for the Dirichlet-Neumann spectrum (and symmetrically for the Neumann-Dirichlet one). Moreover, the results of \cite{SS10} have been transferred to the Neumann-Neumann case in \cite{Bond25-HN} by using the Darboux-type transforms. In view of part (iv) of Proposition~\ref{prop:StL}, the described approach implies all the four estimates \eqref{stabSj}--\eqref{stabCj}. 

It is worth mentioning that the uniform estimate \eqref{estKq} has been directly derived in \cite{But21} (see formula (72) for the case  $M \equiv 0$ in \cite{But21}). 
\end{proof}

Using Proposition~\ref{prop:SC}, Lemma~\ref{lem:stabSC}, and the definitions \eqref{defDPK1}--\eqref{defDPK3}, we obtain the following corollary.

\begin{cor} \label{cor:stabDPK}
Suppose that $Q > 0$ and, for $j = \overline{1,m}$, functions $q_j$ and $\tilde q_j$ of $L_{2,\mathbb R}^0(0,T_j)$ satisfy \eqref{ballQ}. Then
\begin{align*} 
& \| \rho^{m+1} \hat \Delta^{\Pi}(\rho^2) \|_{L_2(\mathbb R)} + 
\| \rho^m \hat \Delta^K(\rho^2) \|_{L_2(\mathbb R)} \le C \sum_{j = 1}^m \| \hat q_j \|_{L_2(0,T_j)}, \\ \nonumber
& \| \rho^m \hat \Delta_k^{\Pi}(\rho^2) \|_{L_2(\mathbb R)} + \| \rho^{m-1} \hat \Delta_k^K(\rho^2) \|_{L_2(\mathbb R)} \le C \sum_{j = 1}^m \| \hat q_j \|_{L_2(0,T_j)}, \quad k = \overline{1,m},
\end{align*}
where the constant $C$ depends only on $Q$. 
\end{cor}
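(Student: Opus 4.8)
The plan is to reduce each of the four estimates to the single-edge bounds of Lemma~\ref{lem:stabSC} by expanding the differences of products through a telescoping identity. For factors $f_1,\dots,f_p$ and $\tilde f_1,\dots,\tilde f_p$ I would use
\[
\prod_{i=1}^p f_i - \prod_{i=1}^p \tilde f_i = \sum_{l=1}^p \Bigl(\prod_{i<l} \tilde f_i\Bigr)\hat f_l \Bigl(\prod_{i>l} f_i\Bigr), \qquad \hat f_l := f_l - \tilde f_l,
\]
so that every summand contains exactly one ``differenced'' factor $\hat f_l$ and $(p-1)$ ``full'' factors. Applying this to $\hat\Delta^{\Pi}$, $\hat\Delta^K$, $\hat\Delta_k^{\Pi}$, $\hat\Delta_k^K$ (for the latter two also over the outer sum in \eqref{defDPK3}) writes each as a finite sum of product-type terms, whose factors are drawn from $\{S_j, S_j', C_k, C_k'\}$ and their tilded counterparts, with a single factor replaced by $\hat S_j$, $\hat S_j'$, $\hat C_k$, or $\hat C_k'$. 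The key structural observation is that the leading trigonometric terms in Proposition~\ref{prop:SC} depend only on the common lengths $\{T_j\}$, hence cancel in every differenced factor; this cancellation is exactly what supplies the extra power of $\rho$ required for $L_2$-integrability.

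Second, I would record two families of bounds on the real line, uniform over the ball $B_Q$. From Proposition~\ref{prop:SC} and the uniform boundedness of Paley--Wiener functions (Cauchy--Schwarz in \eqref{PW}, together with the uniformity of the remainder norms over \eqref{ballQ}), the full factors satisfy, for real $\rho$,
\[
(1+|\rho|)\,|S_j(T_j,\rho^2)|\le C,\quad |S_j'(T_j,\rho^2)|\le C,\quad |C_j(T_j,\rho^2)|\le C,\quad |C_j'(T_j,\rho^2)|\le C(1+|\rho|),
\]
and similarly for the tilded quantities. For the differenced factors, Lemma~\ref{lem:stabSC} provides the weighted $L_2$ bounds $\|\rho^2 \hat S_j(T_j,\rho^2)\|_{L_2(\mathbb R)}$, $\|\rho \hat S_j'(T_j,\rho^2)\|_{L_2(\mathbb R)}$, $\|\rho \hat C_j(T_j,\rho^2)\|_{L_2(\mathbb R)}$, $\|\hat C_j'(T_j,\rho^2)\|_{L_2(\mathbb R)}\le C\|\hat q_j\|_{L_2(0,T_j)}$.

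Third comes the weight bookkeeping. In each summand I distribute the prefactor $\rho^{m+1}$ (resp. $\rho^m$, $\rho^m$, $\rho^{m-1}$) so that the differenced factor receives precisely the power occurring in Lemma~\ref{lem:stabSC} ($\rho^2$ for $\hat S$, $\rho$ for $\hat S'$ and $\hat C$, $\rho^0$ for $\hat C'$) and each remaining $S_j$-factor receives one power of $\rho$. A direct count shows the powers add up exactly to the prescribed weight; for instance, the typical term of $\rho^{m+1}\hat\Delta^{\Pi}$ is $\bigl(\prod_{j<l}\rho\tilde S_j\bigr)(\rho^2\hat S_l)\bigl(\prod_{j>l}\rho S_j\bigr)$ with $(l-1)+2+(m-l)=m+1$. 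Estimating by Hölder's inequality — one $L_2$ factor against $(p-1)$ uniformly bounded factors — then bounds each summand by $C\|\hat q_l\|_{L_2(0,T_l)}$, and summation over $l$ and over the outer sums yields the four stated inequalities.

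The main obstacle is the term of $\hat\Delta_k^K$ coming from the first summand $C_k'\prod_{j\ne k} S_j$ of \eqref{defDPK3} in which one factor $S_l$ ($l\ne k$) is differenced. There the prescribed weight $\rho^{m-1}$ forces the growing factor $C_k'$, which satisfies only $|C_k'(T_k,\rho^2)|\le C(1+|\rho|)$, to be paired with the negative power $\rho^{-1}$; although $C_k'/\rho$ is bounded for large $|\rho|$, it is singular at $\rho=0$. I would resolve this by splitting $\mathbb R$ into $\{|\rho|>1\}$, where $|C_k'(T_k,\rho^2)/\rho|\le C$ and the term equals $(\tilde C_k'/\rho)(\rho^2\hat S_l)\prod_{j\ne k,l}(\rho S_j)$, so that the $L_2$ bound $\|\rho^2\hat S_l\|_{L_2}$ suffices, and the compact set $\{|\rho|\le1\}$, where $|\rho^{m-1}|\le1$, every full factor is uniformly bounded, and the differenced factor contributes the smallness $|\hat S_l(T_l,\rho^2)|\le C\|\hat q_l\|_{L_2(0,T_l)}$ by the Lipschitz dependence of $S_l(T_l,\cdot)$ on $q_l$ over a compact $\lambda$-set. (Equivalently, pairing $C_k'$ with $(1+|\rho|)^{-1}$, which is bounded on all of $\mathbb R$, handles both regions at once.) All remaining terms — the second summand of $\Delta_k^K$ and every term of $\Delta^{\Pi}$, $\Delta^K$, and $\Delta_k^{\Pi}$ — distribute the weights using only nonnegative powers and hence present no such difficulty. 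A minor additional point is to confirm that the Paley--Wiener norms of the remainders in Proposition~\ref{prop:SC}, and therefore all constants above, can be taken uniform over $B_Q$.
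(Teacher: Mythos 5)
Your proposal is correct and follows essentially the route the paper intends: the paper states Corollary~\ref{cor:stabDPK} as an immediate consequence of Proposition~\ref{prop:SC}, Lemma~\ref{lem:stabSC}, and the definitions \eqref{defDPK1}--\eqref{defDPK3} without writing out the details, and your telescoping decomposition with the weight count ($\rho^2$ to $\hat S_j$, $\rho$ to $\hat S_j'$ and $\hat C_k$, $\rho^0$ to $\hat C_k'$, one power of $\rho$ to each remaining $S_j$) is exactly the omitted argument. You also correctly isolate and resolve the only delicate term, where $C_k'(T_k,\la)$ must absorb a negative power of $\rho$ against a differenced $S_l$; the split at $|\rho|=1$ is the right fix there (the parenthetical $(1+|\rho|)^{-1}$ shortcut still implicitly needs that split, but this is immaterial).
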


Finally, using the relations \eqref{defDelta}--\eqref{defd} together with Lemma~\ref{lem:stabSC} and Corollary~\ref{cor:stabDPK}, we obtain the following result.

\begin{cor} \label{cor:stabDelta}
Suppose that $Q > 0$, $a \in \mathbb R \setminus \{ 0 \}$, $\mathbf{q}$ and $\tilde{\mathbf{q}}$ in $B_Q$.
Then
\begin{equation*}
\| \rho^{m + 1} \hat \Delta(\rho^2) \|_{L_2(\mathbb R)} + \sum_{k = 1}^m \| \rho^m \hat \Delta_k(\rho^2) \|_{L_2(\mathbb R)} \le C \sum_{j = 1}^m \| \hat q_j \|_{L_2(0, T_j)},
\end{equation*}
where the constant $C$ depends only on $Q$. In other words, 
the remainders $\varkappa(\rho)$ and $\varkappa_k(\rho)$ ($k = \overline{1,m}$) in the asymptotics \eqref{asymptDelta} and \eqref{asymptDeltak}, respectively, are Lischitz continuous in $L_2(\mathbb R)$-norms with respect to $\mathbf{q}$ in $B_Q$.

In particular, $\| \varkappa \|_{L_2(\mathbb R)}$ and $\| \varkappa_k \|_{L_2(\mathbb R)}$ $(k = \overline{1,m})$ are bounded and so the spectrum $\Lambda$ is bounded from below by a constant that depends only on $Q$.
\end{cor}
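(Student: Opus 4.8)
The plan is to estimate the differences $\hat\Delta$ and $\hat\Delta_k$ directly from the definitions \eqref{defDelta} and \eqref{defDeltak} by means of the elementary identity $\widehat{fg}=\hat f\,g+\tilde f\,\hat g$. Separating in \eqref{defDelta} the loop factors $d-2$ and $h$ from the boundary factors $\Delta^{\Pi}$ and $\Delta^K$, I would first write
\[
\hat\Delta=\hat d\,\Delta^{\Pi}+(\tilde d-2)\hat\Delta^{\Pi}+a\bigl(\hat h\,\Delta^K+\tilde h\,\hat\Delta^K\bigr),
\]
together with the analogous identity for $\hat\Delta_k$ obtained by replacing $\Delta^{\Pi},\Delta^K$ with $\Delta_k^{\Pi},\Delta_k^K$. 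In this way each characteristic difference becomes a sum of four products, in each of which exactly one factor carries the difference of the potentials and the remaining factor is a ``plain'' quantity bounded uniformly over $B_Q$.

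Next I would assemble two families of estimates, all uniform over $\mathbf q,\tilde{\mathbf q}\in B_Q$. Weighted $L_2(\mathbb R)$ bounds for the difference factors come from Lemma~\ref{lem:stabSC} applied to $d=aC_0(T_0,\cdot)+a^{-1}S_0'(T_0,\cdot)$ and $h=S_0(T_0,\cdot)$, namely $\|\rho\hat d(\rho^2)\|_{L_2}\le C\|\hat q_0\|$ and $\|\rho^2\hat h(\rho^2)\|_{L_2}\le C\|\hat q_0\|$, and from Corollary~\ref{cor:stabDPK}, namely $\|\rho^{m+1}\hat\Delta^{\Pi}(\rho^2)\|_{L_2}$, $\|\rho^m\hat\Delta^K(\rho^2)\|_{L_2}$, $\|\rho^m\hat\Delta_k^{\Pi}(\rho^2)\|_{L_2}$, $\|\rho^{m-1}\hat\Delta_k^K(\rho^2)\|_{L_2}\le C\sum_{j=1}^m\|\hat q_j\|$. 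Matching $L_\infty$ bounds for the plain factors, valid on $\{|\rho|\ge1\}$, come from Proposition~\ref{prop:SC} and Corollary~\ref{cor:DPK} together with the boundedness of a Paley-Wiener function by a multiple of its $L_2$-norm (whose size over $B_Q$ is controlled by taking $\tilde q=0$ in Lemma~\ref{lem:stabSC}): $\|d-2\|_\infty$, $\|\rho h(\rho^2)\|_\infty$, $\|\rho^m\Delta^{\Pi}(\rho^2)\|_\infty$, $\|\rho^{m-1}\Delta^K(\rho^2)\|_\infty$, $\|\rho^{m-1}\Delta_k^{\Pi}(\rho^2)\|_\infty$, $\|\rho^{m-2}\Delta_k^K(\rho^2)\|_\infty\le C$, the leading trigonometric terms being universally bounded.

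I would then split $\mathbb R=\{|\rho|\le1\}\cup\{|\rho|>1\}$. On $\{|\rho|>1\}$, multiplying $\hat\Delta$ by $\rho^{m+1}$ and distributing this weight so that each summand factors as (weighted difference)$\times$(weighted plain factor) --- for instance $\rho^{m+1}\hat d\,\Delta^{\Pi}=(\rho\hat d)(\rho^m\Delta^{\Pi})$ and $\rho^{m+1}\hat h\,\Delta^K=(\rho^2\hat h)(\rho^{m-1}\Delta^K)$ --- and applying $\|fg\|_{L_2}\le\|f\|_{L_2}\|g\|_{L_\infty}$ bounds each term by $C\sum_{j=0}^m\|\hat q_j\|$; the identical allocation with total weight $\rho^m$ handles $\hat\Delta_k$. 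On the compact set $\{|\rho|\le1\}$, I would instead use that all functions in \eqref{defDelta}--\eqref{defd} are entire in $\la$ and depend Lipschitz-continuously on the potentials uniformly for $\la$ in a compact set, so that $\|\rho^{m+1}\hat\Delta(\rho^2)\|_{L_2(|\rho|\le1)}$ and $\|\rho^m\hat\Delta_k(\rho^2)\|_{L_2(|\rho|\le1)}$ are directly bounded by $C\sum_{j=0}^m\|\hat q_j\|$. Summing the two regions and then over $k$ gives the asserted inequality, whose right-hand side is governed by $\sum_{j=0}^m\|\hat q_j\|$ because the loop potential $q_0$ enters only through $\hat d$ and $\hat h$.

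The compact region is exactly where the main technical obstacle lies: for $m=1$ the weight $m-2=-1$ makes $\rho^{m-2}\Delta_k^K$ singular at the origin, so a purely global $L_2$-times-$L_\infty$ splitting is impossible and the compactness/continuity argument on $\{|\rho|\le1\}$ is genuinely needed (there the apparent pole is harmless, being cancelled by the double zero of $\rho^2\hat h$ at $\rho=0$). Once the inequality is established, its consequences are immediate: it is precisely the Lipschitz continuity of $\varkappa$ and $\varkappa_k$ claimed in the statement; the choice $\tilde{\mathbf q}=0$, for which $\varkappa=\rho^{m+1}\hat\Delta$ and $\varkappa_k=\rho^m\hat\Delta_k$, yields $\|\varkappa\|_{L_2},\|\varkappa_k\|_{L_2}\le C(Q)$; and the lower bound for $\Lambda$ follows by comparing, along $\rho=i\tau$ as $\tau\to+\infty$, the fixed exponential growth of $\Delta^0$ (whose leading coefficient $(a+a^{-1})+am$ does not vanish for real $a\ne0$) against the remainder $\varkappa(i\tau)\rho^{-(m+1)}$, which it dominates once $\tau$ exceeds a threshold depending only on $Q$.
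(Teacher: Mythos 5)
Your argument is correct and is essentially the paper's own proof, which is given only as a one-line remark: split \eqref{defDelta}--\eqref{defDeltak} by the product rule $\widehat{fg}=\hat f\,g+\tilde f\,\hat g$, bound the difference factors in weighted $L_2$ via Lemma~\ref{lem:stabSC} and Corollary~\ref{cor:stabDPK}, bound the remaining factors in $L_\infty$ uniformly over $B_Q$, and handle $|\rho|\le 1$ separately; your treatment of the $m=1$ weight issue and of the lower bound for $\Lambda$ via the growth of $\Delta^0(-\tau^2)$ is also sound. The one point to note is that your decomposition necessarily produces the right-hand side $C\sum_{j=0}^m\|\hat q_j\|_{L_2(0,T_j)}$, not $C\sum_{j=1}^m$ as printed in the corollary: since $d$ and $h$ depend on $q_0$, the printed inequality fails for $\hat q_0\ne 0$, $\hat q_j=0$ ($j\ge 1$), so the lower summation index in the statement is evidently a typo and your version is the correct one.
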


\begin{remark} \label{rem:nonemp}
Arguments of this section show the feasibility of Theorem~\ref{thm:loc}. Indeed, due to Corollary~\ref{cor:stabDelta}, the remainders $\varkappa(\rho)$ and $\varkappa_k(\rho)$ in \eqref{asymptDelta} and \eqref{asymptDeltak}, respectively, continuously depend on $\mathbf{q}$ in the corresponding $L_2$-norms. Consequently, for every $\eps > 0$, there exists a sufficiently small neighborhood of $\mathbf{q}$ in $L_{2,\mathbb R}^0(G)$ such that the inequality \eqref{difDe} holds for any $\tilde{\mathbf{q}}$ in that neighborhood. Furthermore, using Proposition~\ref{prop:StL}, Lemma~\ref{lem:stabSC}, and the definitions \eqref{defsi}, we conclude that the values $H(\la_n)$ depend continuously on $q_0$, so the signs $\sigma_n = \pm 1$ are preserved for small perturbations of $q_0$. In the case $\sigma_n = 0$, the equality $d(\la_n) = \pm 2$ holds, and it produces some restriction on $q_0$ (see \cite[Remark~5.2]{Bond25-quasi} for details). Anyway, in the quasiperiodic case $a \ne \pm 1$, there is only a finite number of zero signs. Consequently, there exists an infinite set of potentials $\tilde q_0$ such that $\sigma_n = \tilde \sigma_n$ for all $n \ge 1$ in every $\eps$-neighborhood of $q_0$ in $L_{2,\mathbb R}^0(0,T_0)$. Thus, for every $\eps > 0$, there exist potentials $\tilde{\mathbf{q}}$ satisfying the conditions of Theorem~\ref{thm:loc}.  
\end{remark}

\section{Reconstruction on boundary edges} \label{sec:bound}

In this section, we prove the local stability of step~1 in Algorithm~\ref{alg:main}. Namely, we fix $k \in \{ 1, 2, \dots, m \}$ and consider the following auxiliary inverse problem.

\begin{ip} \label{ip:loc}
Given the characteristic functions $\Delta(\la)$ and $\Delta_k(\la)$, find the potential~$q_k$.
\end{ip}

Inverse Problem~\ref{ip:loc} is analogous to the classical Borg problem \cite{Borg46} of the potential reconstruction from two spectra. However, the principal difference of Inverse Problem~\ref{ip:loc} is that the given spectra contain information from the whole graph, not only from the edge $e_k$. In this section, we obtain the Gelfand-Levitan-type equation \eqref{GL} for Inverse Problem~\ref{ip:loc}. Moreover, we prove the solvability of equation \eqref{GL} under a small perturbation of the spectral data and estimate its solution.

Introduce the function 
\begin{equation} \label{defM}
M_k(\la) := -\dfrac{\Delta_k(\la)}{\Delta(\la)},
\end{equation}
which is called the Weyl function with respect to the boundary vertex $v_k$ due to \cite{Yur08}. Clearly, the Weyl function is meromorphic in the $\la$-plane and its poles coincide with the eigenvalues of the problem $\mathcal L$, so they are real and bounded from below.
According to \cite[Theorem~2]{Yur08}, the Weyl function $M_k(\la)$ uniquely specifies the potential $q_k$. Hence, the solution of Inverse Problem~\ref{ip:loc} is unique. 

Below in this section, we for brevity omit the index $k$ when possible. In particular, we put $q := q_k$, $M := M_k$, $T := T_k$, etc. 
Consider two problems $\mathcal L$ and $\tilde{\mathcal L}$ of form \eqref{eqv}--\eqref{mc} with different potentials $\mathbf{q}$ and $\tilde{\mathbf{q}}$, respectively, both in $L_{2,\mathbb R}^0(G)$.

Introduce the following contour in the $\rho$-plane:
\begin{equation} \label{defga}
\ga := \{ \rho = \sigma + i \tau \colon -\infty < \sigma < \infty \},
\end{equation}
and its image in the $\la$-plane:
\begin{equation} \label{defGa}
\Gamma := \{ \la = \rho^2 \colon \rho \in \gamma \}
\end{equation}
with the counter-clockwise circuit (see Figure~\ref{fig:contour}). The value $\tau > 0$ is chosen so large that all the eigenvalues of $\mathcal L$ and $\tilde{\mathcal L}$ lie inside $\Gamma$. This is possible since the spectra $\Lambda$ and $\tilde \Lambda$ are bounded from below by virtue of Corollary~\ref{cor:stabDelta}.

\begin{figure}[h!]
\centering
\begin{tikzpicture}
  \draw[thin,dashed] (-0.5, 0) edge (6.5, 0);
  \draw[very thick,domain=-2.5:-1.5] plot (\x*\x,\x);
  \draw[very thick,domain=-1.5:1.5,<-] plot (\x*\x,\x);
  \draw[very thick,domain=1.5:2.5,<-] plot (\x*\x,\x);
  \filldraw (0.5, 0) circle(2pt);
  \filldraw (2.1, 0) circle(2pt);
  \filldraw (3.7, 0) circle(2pt);
  \filldraw (4.2, 0) circle (2pt);
  \filldraw (5.4, 0) circle (2pt);
  \filldraw (6.3, 0) circle (2pt);
  \draw (4,-0.4) node{eigenvalues};
\end{tikzpicture}
\caption{Contour $\Gamma$}
\label{fig:contour}
\end{figure}
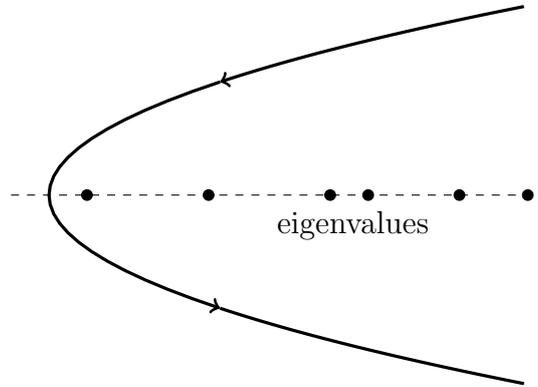

According to the results of \cite{Yur08}, the following relation holds:
\begin{equation} \label{maineq}
\tilde S(x, \la) = S(x, \la) + \frac{1}{2 \pi i} \int_{\Gamma} R(x, \la, \mu) \tilde S(x, \mu) \, d\mu,
\end{equation}
where 
\begin{equation} \label{defR}
R(x, \la, \mu) := \int_0^x S(t, \la) S(t, \mu) \hat M(\mu) \, dt, \quad \hat M(\mu) := M(\mu) - \tilde M(\mu).
\end{equation}

The contour \eqref{defGa} is slightly different from the one in \cite{Yur08}, but this difference is not principal, since in the both cases the contour encloses all the eigenvalues of $\mathcal L$ and $\tilde{\mathcal L}$. 

For the solutions $S(x, \la)$ and $\tilde S(x, \la)$, there exists the transformation operator (see, e.g., \cite[Chapter~I]{Lev84}):
\begin{equation} \label{trans}
    \tilde S(x, \la) = S(x, \la) + \int_0^x K(x, t) S(t, \la) \, dt,
\end{equation}
where the kernel $K(x, t)$ is continuous for $0 \le t \le x \le T$, the derivatives $K_x$ and $K_t$ are $L_2$-functions for each fixed $x \in (0,T]$ or $t \in [0,T)$. Furthermore, $K(x, 0) = 0$ and
$$
K(x, x) = \frac{1}{2}\int_0^x (\tilde q - q)(t) \, dt.
$$

Our analysis of the inverse problem on a boundary edge relies on the following theorem.

\begin{thm} \label{thm:GL}
For each fixed $x \in [0,T]$, the kernel $K(x, t)$ of the transformation operator in \eqref{trans} satisfies the Gelfand-Levitan-type equation
\begin{equation} \label{GL}
K(x, t) + F(x, t) + \int_0^x K(x, s) F(s, t) \, ds = 0,
\end{equation}
where
\begin{equation} \label{defF}
F(x, t) := -\frac{1}{2 \pi i} \int_{\Gamma} \hat M(\mu) S(x, \mu) S(t, \mu) \, d\mu.
\end{equation}
\end{thm}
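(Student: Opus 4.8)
The plan is to derive \eqref{GL} by combining the two available representations of $\tilde S(x,\la)$ --- the contour-integral relation \eqref{maineq} from \cite{Yur08} and the transformation-operator formula \eqref{trans} --- and then to invoke completeness of the Dirichlet eigenfunctions to pass from an integral identity to a pointwise one.

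First I would insert the definition \eqref{defR} of $R$ into \eqref{maineq} and interchange the $t$- and $\mu$-integrations, rewriting the relation as
\[
\tilde S(x,\la) - S(x,\la) = \int_0^x S(t,\la)\left(\frac{1}{2\pi i}\int_{\Gamma} \hat M(\mu)\, S(t,\mu)\, \tilde S(x,\mu)\,d\mu\right)dt.
\]
Next, inside the inner contour integral I would replace $\tilde S(x,\mu)$ by its transformation-operator expression $S(x,\mu) + \int_0^x K(x,s) S(s,\mu)\,ds$ from \eqref{trans}. The contribution of the term $S(x,\mu)$ is exactly $-F(x,t)$ by the definition \eqref{defF}, while the contribution of the kernel term, after a further interchange of the $s$- and $\mu$-integrations, equals $-\int_0^x K(x,s) F(s,t)\,ds$, again by \eqref{defF} (using that $F(s,t) = -\frac{1}{2\pi i}\int_{\Gamma} \hat M(\mu) S(s,\mu) S(t,\mu)\,d\mu$). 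Thus the right-hand side becomes $-\int_0^x S(t,\la)\bigl(F(x,t) + \int_0^x K(x,s) F(s,t)\,ds\bigr)\,dt$.

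On the other hand, the left-hand side $\tilde S(x,\la) - S(x,\la)$ equals $\int_0^x K(x,t) S(t,\la)\,dt$ directly from \eqref{trans}. Subtracting the two expressions, I obtain
\[
\int_0^x \left(K(x,t) + F(x,t) + \int_0^x K(x,s) F(s,t)\,ds\right) S(t,\la)\,dt = 0
\]
for every $\la \in \mathbb C$. Denote the bracketed function of $t$ by $g_x(t)$; for fixed $x$ it lies in $L_2(0,x)$ by the regularity of $K$ recorded after \eqref{trans} and the $L_2$-behaviour of $F$. Letting $\la$ run through the zeros of $S(x,\la)$, i.e. the Dirichlet eigenvalues on $[0,x]$, whose eigenfunctions $\{ S(t,\la) \}$ form a complete orthogonal system in $L_2(0,x)$ by the self-adjointness of the Dirichlet problem on $[0,x]$ (Proposition~\ref{prop:StL}(iii) applied on that subinterval), I conclude $g_x \equiv 0$, which is precisely \eqref{GL}.

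The main obstacle is the rigorous justification of the interchanges of integration over the unbounded contour $\Gamma$ with the integrations in $t$ and $s$. This requires absolute-convergence estimates assembled from the asymptotics \eqref{asymptS}--\eqref{asymptSp} for $S(t,\mu)$ and $\tilde S(x,\mu)$ together with the decay of $\hat M(\mu) = M(\mu) - \tilde M(\mu)$ along $\Gamma$: since the leading terms of $\Delta_k$ and $\Delta$ (and of their tilded counterparts) cancel in the difference, $\hat M$ decays fast enough to make all the double integrals absolutely convergent and Fubini's theorem applicable. A secondary point to check is that the completeness and orthogonality of Proposition~\ref{prop:StL}(iii), stated for the full edge, carry over to every subinterval $[0,x]$, which they do because the assertion concerns the Dirichlet problem posed on the interval of integration itself.
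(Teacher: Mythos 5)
Your proposal is correct and follows essentially the same route as the paper: substitute the transformation-operator representation \eqref{trans} into the contour relation \eqref{maineq}, interchange the integrations (justified by the absolute convergence coming from $\hat M(\rho^2)\in L_2(\gamma)$, which the paper isolates as Lemmas~\ref{lem:hatM} and~\ref{lem:conv}), and then kill the resulting integral identity $\int_0^x g_x(t)S(t,\la)\,dt\equiv 0$ using the completeness of Proposition~\ref{prop:StL}(iii) on the subinterval $[0,x]$. The paper's proof is just a more compressed version of the same computation.
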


Note that equation \eqref{GL} differs from the classical Gelfand-Levitan equation (see \cite{Lev84, FY01}), since the function \eqref{defF} in \eqref{GL} depends on $\hat M(\mu)$ and so contains the spectral information from the whole graph.

Before proving Theorem~\ref{thm:GL}, we need two auxiliary lemmas.

\begin{lem} \label{lem:hatM}
$\hat M(\rho^2) \in L_2(\gamma)$.
\end{lem}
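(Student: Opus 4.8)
The goal is to show that $\hat M(\rho^2)$, restricted to the horizontal line $\gamma$, is square-integrable. The plan is to use the definition $M_k = -\Delta_k/\Delta$ together with the asymptotic representations \eqref{asymptDelta} and \eqref{asymptDeltak} already established, and to exploit the fact that on $\gamma$ we are bounded away (in the vertical direction) from the real axis, where all zeros of $\Delta$ and $\tilde\Delta$ accumulate.

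First I would write out $\hat M = M - \tilde M$ as a single fraction:
\begin{equation*}
\hat M(\la) = -\frac{\Delta_k}{\Delta} + \frac{\tilde\Delta_k}{\tilde\Delta}
= -\frac{\Delta_k \tilde\Delta - \tilde\Delta_k \Delta}{\Delta \, \tilde\Delta}.
\end{equation*}
The numerator I would expand by inserting the decompositions $\Delta = \Delta^0 + \varkappa/\rho^{m+1}$, $\Delta_k = \Delta_k^0 + \varkappa_k/\rho^m$ and their tilded analogues. Since $\Delta^0 = \tilde\Delta^0$ and $\Delta_k^0 = \tilde\Delta_k^0$ (the free problem is independent of the potential), the leading products $\Delta_k^0\tilde\Delta^0 - \tilde\Delta_k^0\Delta^0$ cancel, and what survives is a combination of cross terms, each containing at least one factor $\hat\varkappa = \varkappa - \tilde\varkappa$ or $\hat\varkappa_k$, or products of the remainders $\varkappa,\varkappa_k$ themselves. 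Tracking the powers of $\rho$, the numerator on $\gamma$ will decay like $\rho^{-(2m+1)}$ relative to the growth of the zeroth-order terms, so that after dividing by $\Delta\tilde\Delta$ one obtains the desired $L_2$ decay. I would state the key elementary bounds for the Paley-Wiener remainders: each of $\varkappa,\varkappa_k \in PW(\mathbf T)$ is entire of exponential type and, on the line $\gamma$ (i.e.\ $|{\rm Im}\,\rho| = \tau$ fixed), satisfies $\varkappa(\rho) = O(\exp(\mathbf T\,|{\rm Im}\,\rho|)) = O(1)$ uniformly, so all remainders are bounded on $\gamma$.

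The main obstacle is controlling the denominator $\Delta\tilde\Delta$ from below on $\gamma$. Here I would rely on the standard lower bound $|\Delta^0(\rho^2)| \geq C\,|\rho|^{-m}\exp((\mathbf T - T_0 + T_0)|{\rm Im}\,\rho|)$ available away from the zeros: because $\tau$ is fixed and positive, the trigonometric factors $\sin\rho T_j$, $\cos\rho T_j$ are bounded below in modulus by $c\exp(T_j|{\rm Im}\,\rho|)$, and the dominant term of $\Delta^0$ does not vanish on the line. Combined with \eqref{asymptDelta}, which shows the remainder is of strictly lower order, this yields $|\Delta(\rho^2)| \asymp |\rho|^{-m}\exp(\mathbf T|{\rm Im}\,\rho|)$ on $\gamma$, and likewise for $\tilde\Delta$ (this is precisely where the bounded-below-spectrum property from Corollary~\ref{cor:stabDelta} and the choice of $\tau$ enter). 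Substituting the numerator estimate $O(|\rho|^{-(2m+1)}\exp(2\mathbf T|{\rm Im}\,\rho|))$ and the denominator estimate into the fraction gives
\begin{equation*}
|\hat M(\rho^2)| \le \frac{C\,|\rho|^{-(2m+1)}\exp(2\mathbf T|{\rm Im}\,\rho|)}{C\,|\rho|^{-2m}\exp(2\mathbf T|{\rm Im}\,\rho|)} = O(|\rho|^{-1})
\end{equation*}
as $|\rho| \to \infty$ along $\gamma$, which is square-integrable; near the origin $\hat M$ is bounded on $\gamma$ since the contour stays off the real axis where the poles sit. Care is needed to verify that the claimed lower bound for $|\Delta^0|$ holds uniformly along the whole line, but since $\tau$ is a fixed positive constant this is a routine verification once the worst-case cancellations in the trigonometric product are checked.
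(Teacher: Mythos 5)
Your overall strategy (write $\hat M = -(\Delta_k\tilde\Delta - \tilde\Delta_k\Delta)/(\Delta\tilde\Delta)$, cancel the free-problem terms, bound $|\Delta\tilde\Delta|$ from below on $\gamma$ via the two-sided estimate $\Delta(\rho^2)\asymp\rho^{-m}$) is the same as the paper's. However, there is a genuine gap in the final power counting. After the cancellation of $\Delta_k^0\tilde\Delta^0-\tilde\Delta_k^0\Delta^0$, the surviving cross terms are $-\Delta_k^0\,\hat\varkappa/\rho^{m+1}$ and $\Delta^0\,\hat\varkappa_k/\rho^{m}$ (plus a genuinely smaller term quadratic in the remainders). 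Since $\Delta_k^0=O(\rho^{-(m-1)})$ and $\Delta^0=O(\rho^{-m})$ on $\gamma$, and since you only use that $\hat\varkappa,\hat\varkappa_k$ are \emph{bounded} on $\gamma$, these cross terms are $O(\rho^{-2m})$, not $O(\rho^{-(2m+1)})$ as you claim. Dividing by $|\Delta\tilde\Delta|\asymp\rho^{-2m}$ then yields only $\hat M(\rho^2)=O(1)$, which is not square-integrable on the line; your asserted pointwise bound $\hat M(\rho^2)=O(|\rho|^{-1})$ does not follow from boundedness of the remainders.

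The missing ingredient is that $\rho^{m+1}\hat\Delta(\rho^2)=\hat\varkappa(\rho)$ and $\rho^{m}\hat\Delta_k(\rho^2)=\hat\varkappa_k(\rho)$ belong to $PW(\mathbf T)$ and hence, by Plancherel's theorem \eqref{plan} (applied on the shifted line $\gamma$), lie in $L_2(\gamma)$ — not merely $L_\infty(\gamma)$. Writing $\hat M=-\hat\Delta_k/\Delta+\tilde\Delta_k\hat\Delta/(\Delta\tilde\Delta)$ and using $|\Delta|^{-1}\le C|\rho|^{m}$, $|\tilde\Delta_k|\le C|\rho|^{-(m-1)}$, one gets $|\hat M(\rho^2)|\le C\bigl(|\rho^{m}\hat\Delta_k(\rho^2)|+|\rho|^{-1}\,|\rho^{m+1}\hat\Delta(\rho^2)|\bigr)$, and the $L_2(\gamma)$ membership of the two bracketed Paley--Wiener functions is what delivers the conclusion. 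This is exactly the step the paper's proof invokes (``in particular, they belong to $L_2(\gamma)$''), and without it your argument does not close.
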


\begin{proof}
In view of \eqref{defDelta0}--\eqref{asymptDeltak}, we have
\begin{equation} \label{est2side}
\Delta(\rho^2) \asymp \rho^{-m}, \quad |\Delta_k(\rho^2)| \le C |\rho|^{-(m-1)}, \quad \rho \in \gamma.
\end{equation}
The similar estimates hold for $\tilde \Delta(\rho^2)$ and $\tilde \Delta_k(\rho^2)$.

The relations \eqref{asymptDelta} and \eqref{asymptDeltak} imply that
$\rho^{m+1} \hat \Delta(\rho^2)$ and $\rho^m \hat \Delta_k(\rho^2)$ are functions of $PW(\mathbf{T})$, in particular, they belong to $L_2(\gamma)$. Consequently, using \eqref{defM} and \eqref{est2side}, we arrive at the assertion of the lemma.
\end{proof}

\begin{lem} \label{lem:conv}
The following assertions hold:

(i) The integral in \eqref{maineq} converges absolutely and uniformly by $x \in [0,T]$ and $\la$ on compact sets. 

(ii) The integral in \eqref{defF} converges absolutely and uniformly by $(x,t)$ on $[0,T] \times [0, T]$. 
\end{lem}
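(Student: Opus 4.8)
The plan is to reduce both integrals, via the substitution $\mu = \theta^2$, $d\mu = 2\theta\,d\theta$, to integrals over the horizontal line $\ga$ from \eqref{defga}, and then to apply the Cauchy-Schwarz inequality using Lemma~\ref{lem:hatM} together with the asymptotics \eqref{asymptS}. The decisive observation is that on $\ga$ the imaginary part $\mathrm{Im}\,\theta = \tau$ is constant, so the exponential factor $\exp(|\mathrm{Im}\,\theta|\,x)$ appearing in \eqref{asymptS} is bounded by $\exp(\tau T)$ uniformly in $x \in [0,T]$. Since $|\theta| \ge \tau > 0$ on $\ga$, the relation \eqref{asymptS} then yields
\begin{equation*}
|S(x, \theta^2)| \le \frac{C}{|\theta|}, \qquad x \in [0,T], \ \theta \in \ga,
\end{equation*}
and the same bound holds for $\tilde S(x,\theta^2)$.

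For part (ii), I would substitute $\mu = \theta^2$ into \eqref{defF} and apply the above bound to both factors $S(x,\theta^2)$ and $S(t,\theta^2)$, so that the integrand is dominated by
\begin{equation*}
|\hat M(\theta^2)|\cdot\frac{C}{|\theta|}\cdot\frac{C}{|\theta|}\cdot 2|\theta| = \frac{C' |\hat M(\theta^2)|}{|\theta|},
\end{equation*}
the two factors $|\theta|^{-1}$ from the solutions being partly cancelled by the Jacobian $2\theta$. Since $\hat M(\theta^2) \in L_2(\ga)$ by Lemma~\ref{lem:hatM} and $|\theta|^{-1} \in L_2(\ga)$ (because $\int_{-\infty}^\infty (\sigma^2 + \tau^2)^{-1}\,d\sigma = \pi/\tau < \infty$), Cauchy-Schwarz gives
\begin{equation*}
\int_{\ga} \frac{|\hat M(\theta^2)|}{|\theta|}\,|d\theta| \le \|\hat M(\theta^2)\|_{L_2(\ga)}\,\big\||\theta|^{-1}\big\|_{L_2(\ga)} < \infty.
\end{equation*}
As this dominating bound is independent of $(x,t)$, the integral in \eqref{defF} converges absolutely and uniformly on $[0,T]\times[0,T]$.

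For part (i), I would follow the same scheme. Using \eqref{defR}, for $\la$ in a fixed compact set the factor $S(t,\la)$ is bounded uniformly in $t \in [0,T]$ (being entire in $\la$ and continuous in $t$), so that
\begin{equation*}
|R(x,\la,\theta^2)| \le \int_0^x |S(t,\la)|\,|S(t,\theta^2)|\,|\hat M(\theta^2)|\,dt \le \frac{C(\la)\,|\hat M(\theta^2)|}{|\theta|}.
\end{equation*}
Combining this with $|\tilde S(x,\theta^2)| \le C/|\theta|$ and the Jacobian $2\theta$, the integrand in \eqref{maineq} is again dominated by $C'(\la)\,|\hat M(\theta^2)|/|\theta|$, which is integrable over $\ga$ by the same Cauchy-Schwarz estimate, with a bound uniform in $x \in [0,T]$ and in $\la$ on compact sets.

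The only point requiring care — the main obstacle, such as it is — is the power counting: each solution $S$ contributes a decay $|\theta|^{-1}$, but the change of variables restores one power of $|\theta|$ through $d\mu = 2\theta\,d\theta$, so the integrand decays only like $|\hat M(\theta^2)|/|\theta|$. This is precisely the borderline rate at which absolute integrability still holds, and it is secured exactly by pairing the $L_2(\ga)$-membership of $\hat M(\theta^2)$ from Lemma~\ref{lem:hatM} with that of $|\theta|^{-1}$ via Cauchy-Schwarz; the uniformity then comes for free, since the dominating function no longer depends on the running parameters.
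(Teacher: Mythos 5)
Your proposal is correct and follows essentially the same route as the paper: bound $|R(x,\la,\mu)\tilde S(x,\mu)|$ (respectively the integrand of \eqref{defF}) by $C|\theta|^{-2}|\hat M(\theta^2)|$ using \eqref{asymptS} on the line $\ga$, absorb one power of $|\theta|$ into the Jacobian $d\mu = 2\theta\,d\theta$, and conclude via Lemma~\ref{lem:hatM}. The only difference is that you spell out the final integrability step (Cauchy--Schwarz against $|\theta|^{-1}\in L_2(\ga)$), which the paper leaves implicit.
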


\begin{proof}
Using the standard asymptotics \eqref{asymptS} and \eqref{defR}, we obtain
$$
\bigl| R(x, \la, \mu) \tilde S(x, \mu) \bigr| \le C |\theta|^{-2} |\hat M(\theta^2)|, \quad \mu = \theta^2,
$$
for $x \in [0,T]$ and $\la$ on compact sets.
Then, taking the relation $d\mu = 2 \theta d\theta$ and Lemma~\ref{lem:hatM} into account, we arrive at the assertion (i) of this lemma. The assertion (ii) is proved similarly by using \eqref{defF}.
\end{proof}

\begin{proof}[Proof of Theorem~\ref{thm:GL}]
Fix $x \in (0,T]$. Substitution of \eqref{trans} into \eqref{maineq} and the change of the integration order by taking Lemma~\ref{lem:conv} into account imply
\begin{equation} \label{sm1}
\int_0^x K(x, t) S(t, \la) \, dt = \int_0^x S(t, \la) \left( \frac{1}{2\pi i} \int_{\Gamma} S(t,\mu) \hat M(\mu) \Bigl[ S(x,\mu) + \int_0^x K(x,s) S(s,\mu) \, ds \Bigr] d\mu \right) \, dt.
\end{equation}

In view of part (iii) in Proposition~\ref{prop:StL},
the equation
$$
\int_0^x f(t) S(t,\la) \, dt \equiv 0, \quad \forall \la \in \mathbb C,
$$
only has the zero solution $f$ in $L_2(0,x)$. Thus, changing the order of the two inner integrals in \eqref{sm1}, we derive
$$
K(x,t) = \frac{1}{2\pi i} \int_{\Gamma} \hat M(\mu) S(x,\mu) S(t,\mu) \, d\mu + \int_0^x K(x,s) \left( \frac{1}{2\pi i} \int_{\Gamma} \hat M(\mu) S(x,\mu) S(s,\mu) \, d\mu \right) ds,
$$
which together with \eqref{defF} imply \eqref{GL}. 
\end{proof}

The following theorem asserts the local solvability and stability of the Gelfand-Levitan-type equation \eqref{GL}.

\begin{thm} \label{thm:GLloc}
Let $\mathcal L = \mathcal L(\mathbf{q})$ be a boundary value problem of form \eqref{eqv}--\eqref{mc} with $\mathbf{q} \in L^0_{2,\mathbb R}(G)$. Then, there exists $\eps > 0$ (depending on $\mathcal L$) such that, for any entire functions $\tilde \Delta(\la)$ and $\tilde \Delta_k(\la)$ satisfying the conditions
\begin{gather} \label{PWde}
\rho^{m+1}\hat \Delta(\rho^2) \in PW(\mathbf{T}), \quad
\rho^m \hat \Delta_k(\rho^2) \in PW(\mathbf{T}), \\ \label{defdek}
\de_k := \| \rho^{m+1}\hat \Delta(\rho^2) \|_{L_2(\mathbb R)} + 
\| \rho^m \hat \Delta_k(\rho^2) \|_{L_2(\mathbb R)} \le \eps,
\end{gather}
the corresponding equation \eqref{GL} has a unique solution $K(x,t)$ for each fixed $x \in [0,T]$, there holds $\dfrac{d}{dx} K(x,x) \in L_2(0,T)$, and the function $\tilde S(x, \la)$ defined by \eqref{trans} solves the initial value problem
\begin{equation} \label{initS}
-\tilde S''(x, \la) + \tilde q(x) \tilde S(x, \la) = \la \tilde S(x, \la), \quad \tilde S(0,\la) = 0, \quad \tilde S'(0,\la) = 1,
\end{equation}
where 
\begin{equation} \label{findq}
\tilde q(x) := q(x) + 2 \frac{d}{dx} K(x,x).
\end{equation}
Moreover,
\begin{equation} \label{estq}
\| \hat q \|_{L_2(0,T)} \le C \de_k,
\end{equation}
where the constant $C$ depends only on the problem $\mathcal L$. In this theorem, the function $F(x,t)$ in equation \eqref{GL} is assumed to be constructed by using $\tilde \Delta(\la)$ and $\tilde \Delta_k(\la)$ via \eqref{defF}, wherein the contour $\Gamma$ is given by \eqref{defGa} and $\tau > 0$ is chosen so large that the eigenvalues of $\mathcal L$ lie inside $\Gamma$.
\end{thm}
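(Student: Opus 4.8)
The plan is to treat \eqref{GL} as a family of Fredholm equations of the second kind in $L_2(0,x)$, parametrized by $x \in [0,T]$, and to show that the perturbation is small enough to invert the governing operator by a Neumann series. Writing \eqref{GL} as $(I + \mathcal F_x) K(x,\cdot) = -F(x,\cdot)$, where $\mathcal F_x$ is the integral operator on $L_2(0,x)$ with kernel $F(s,t)$, the key preliminary estimate is a quantitative bound on $\hat M$ along $\ga$. From \eqref{defM} an elementary computation gives $\hat M = -\hat \Delta_k / \tilde\Delta + \Delta_k \hat\Delta / (\Delta \tilde\Delta)$, which is linear in $\hat\Delta$ and $\hat\Delta_k$. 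Using the two-sided asymptotics $\Delta(\rho^2) \asymp \rho^{-m}$ and $|\Delta_k(\rho^2)| \le C|\rho|^{-(m-1)}$ from \eqref{est2side}, the smallness \eqref{defdek}, and the fact that a function of $PW(\mathbf T)$ is bounded on $\ga$ by a constant times its $L_2(\mathbb R)$-norm, I would first show that for $\de_k$ small enough $\tilde\Delta$ does not vanish on $\ga$ (so that $F$ is well defined) and that both $\|\hat M(\rho^2)\|_{L_2(\ga)} \le C\de_k$ and $\|\hat M(\rho^2)\|_{L_\infty(\ga)} \le C\de_k$.

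Solvability then follows. Converting the contour integral in \eqref{defF} to the variable $\theta$ (with $d\mu = 2\theta\,d\theta$) and using the asymptotics \eqref{asymptS}, the leading part of $F(x,t)$ is the Fourier-type integral $-\frac{1}{\pi i}\int_\ga \hat M(\theta^2)\tfrac{\sin\theta x}{\theta}\sin\theta t\,d\theta$; since $\hat M(\theta^2)\tfrac{\sin\theta x}{\theta} \in L_2(\ga)$ with norm $\le C\de_k$, Plancherel's theorem \eqref{plan} yields $\|F(x,\cdot)\|_{L_2(0,x)} \le C\de_k$ uniformly in $x$. A similar argument, now using the $L_\infty(\ga)$ bound on $\hat M$ together with the Bessel property of the system $\{ S(\cdot,\theta^2)\}$, gives the operator bound $\|\mathcal F_x\|_{L_2(0,x) \to L_2(0,x)} \le C\de_k$, again uniformly in $x$. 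Choosing $\eps$ so small that $C\eps < 1$, the operator $I + \mathcal F_x$ is invertible by the Neumann series for every $x \in [0,T]$; hence \eqref{GL} has a unique solution with $\|K(x,\cdot)\|_{L_2(0,x)} \le C\de_k$, and its continuity together with the integrability of its derivatives are inherited from those of $F$.

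Having $K$, I would define $\tilde S$ by \eqref{trans} and $\tilde q$ by \eqref{findq}. The verification that $\tilde S$ solves the initial value problem \eqref{initS} is the standard Gelfand--Levitan computation: differentiating \eqref{trans} twice and using that $K$ satisfies \eqref{GL} (equivalently, the Goursat-type relation $K_{xx} - K_{tt} = \tilde q(x) K$), one checks the equation, while the conditions $\tilde S(0,\la) = 0$, $\tilde S'(0,\la) = 1$ follow from $K(x,0) = 0$ together with $S(0,\la) = 0$, $S'(0,\la) = 1$.

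The main obstacle is the final $L_2$-estimate \eqref{estq}. Since $\hat q = -2\frac{d}{dx}K(x,x)$ by \eqref{findq}, it suffices to prove $\frac{d}{dx}K(x,x) \in L_2(0,T)$ with norm $\le C\de_k$. Setting $t = x$ in \eqref{GL} and differentiating in $x$, the dominant term is $\frac{d}{dx}F(x,x)$. The subtlety here is that naive differentiation of $F$ costs a factor $\theta$ that is not controlled in $L_2(\ga)$; however, on the diagonal this factor is cancelled by the $\theta^{-1}$ in $S(x,\theta^2)^2\theta$, so that $\frac{d}{dx}F(x,x)$ reduces, to leading order, to $-\frac{1}{\pi i}\int_\ga \hat M(\theta^2)\sin 2\theta x\,d\theta$, the Fourier transform of $\hat M(\theta^2)$. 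Applying Plancherel once more, and using precisely the $L_2(\ga)$ bound on $\hat M$ rather than merely the $L_\infty(\ga)$ one, gives $\|\frac{d}{dx}F(x,x)\|_{L_2(0,T)} \le C\de_k$. The remaining terms produced by differentiating the integral in \eqref{GL} involve products of $K$ and derivatives of $F$, and are handled by the already established estimates together with the uniform invertibility of $I + \mathcal F_x$. The delicate point throughout is to work with the total diagonal derivative $\frac{d}{dx}K(x,x) = (K_x + K_t)(x,x)$ rather than with the individual partials, since only in the combination do the uncontrolled factors of $\theta$ cancel; keeping track of this cancellation, rather than any single estimate, is where the care is needed.
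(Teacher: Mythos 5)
Your proposal is correct and follows essentially the same route as the paper's proof: an $L_2(\ga)$ bound $\| \hat M(\rho^2)\|_{L_2(\ga)} \le C\de_k$ obtained from \eqref{est2side} and the Paley--Wiener hypotheses, smallness of $F$ and invertibility of $I+\mathfrak F$ by a Neumann-series argument, the standard Gelfand--Levitan verification of \eqref{initS}, and differentiation of \eqref{GL} on the diagonal combined with Plancherel to get \eqref{estq}. One minor inaccuracy that does not affect validity: the individual partials $F_x(x,\cdot)$ and $K_x(x,\cdot)$ \emph{are} controlled in $L_2(0,x)$ for each fixed $x$ (differentiation only upgrades the integrand from absolutely convergent to $L_2$-convergent, and the paper proves and actually needs these bounds to handle the term $\int_0^x K_x(x,s) F(s,x)\, ds$), so no special cancellation on the diagonal is required.
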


\begin{proof}
Let $\mathcal L$ and $\ga$ satisfy the hypothesis of this theorem. Suppose that the functions $\tilde \Delta(\la)$ and $\tilde \Delta_k(\la)$ fulfill \eqref{PWde}.
Then $\tilde \Delta(\la)$ and $\tilde \Delta_k(\la)$ satisfy the asymptotics \eqref{asymptDelta} and \eqref{asymptDeltak}, respectively. Furthermore, according to \eqref{PW}, we have
$$
\| \mathcal F \|_{L_2(\mathbb R)} \asymp \| \mathcal F \|_{L_2(\ga)} \asymp \| f \|_{L_2(-T,T)}, \quad \forall \mathcal F \in PW(T).
$$
Consequently, it follows from \eqref{difDe} that
\begin{equation} \label{difDe2}
\| \rho^{m+1}\hat \Delta(\rho^2) \|_{L_2(\ga)} \le C\de_k, \quad
\| \rho^m \hat \Delta_k(\rho^2) \|_{L_2(\ga)} \le C\de_k. 
\end{equation}

Next, assume that \eqref{difDe} holds for a sufficiently small $\eps > 0$. Then, using \eqref{difDe2} and following the proof strategy of Lemma~\ref{lem:hatM}, we conclude that $\hat M(\rho^2) \in L_2(\ga)$ and 
\begin{equation} \label{estM}
\| \hat M(\rho^2) \|_{L_2(\ga)} \le C \de_k.
\end{equation}

Using \eqref{defF}, \eqref{asymptS}, and \eqref{estM}, we obtain
\begin{equation} \label{estF}
\max_{x,t} |F(x,t)| \le C \de_k.
\end{equation}
Furthermore, differentiating \eqref{defF} and using the standard asymptotics \eqref{asymptS} and \eqref{asymptSp},
we obtain
\begin{align*}
F_x(x,t) & = -\frac{1}{2\pi i} \int_{\Gamma} \hat M(\mu) S'(x,\mu) S(t,\mu) \, d\mu \\
& = -\frac{1}{\pi i} \int_{\ga} \hat M(\theta^2) \cos \theta x \sin \theta t \, d\theta + \int_{\ga} \hat M(\theta^2) O(\theta^{-1}) d\theta.
\end{align*}
Consequently, the derivative $F_x$ and, similarly, $F_t$ belong to $L_2(0,T)$ for each fixed $x$ or $t$. In view of \eqref{estM}, the $L_2$-norms of the derivatives are estimated by $C\de_k$, in particular:
\begin{gather} \label{estFx}
\| F_x(x,.) \|_{L_2(0,x)} \le C\de_k, \quad \| F_t(.,t) \|_{L_2(0,t)} \le C\de_k, \\ \label{estFx2}
\left\| \frac{d}{dx} F(x,x) \right\|_{L_2(0,T)} \le C\de_k.
\end{gather}

For each fixed $x \in [0,T]$, consider the integral operator $I + \mathfrak F$ on $C[0,x]$, where $I$ is the identity operator and 
$(\mathfrak F z)(t) := \int_0^x z(s) F(s,t) \, ds$.
In view of \eqref{estF}, for sufficiently small $\de_k$, the operator $I + \mathfrak F$ has a bounded inverse operator on $C[0,x]$. Consequently, equation \eqref{GL} has a unique continuous solution $K(x,.)$. Repeating the standard arguments (see, e.g., \cite[Section 1.5.2]{FY01}), we show that the solution $K(x,t)$ possesses the same smoothness as $F(x,t)$. In particular, the function $\tilde q(x)$ defined by \eqref{findq} belongs to $L_2(0,T)$ and the relations \eqref{initS} hold.

Let us focus on obtaining the estimate \eqref{estq}. It follows from \eqref{GL}, \eqref{estF}, and the boundedness of the operator $(I + \mathfrak F)^{-1}$ that
\begin{equation} \label{estK}
\max_{x,t} |K(x,t)| \le C \de_k.
\end{equation}
Differentiation of \eqref{GL} by $x$ implies
\begin{equation} \label{GLx}
K_x(x,t) + \int_0^x K_x(x,s) F(s,t) \, ds = -F_x(x,t) - K(x,x) F(x,t).
\end{equation}
According to \eqref{estF}, \eqref{estFx}, and \eqref{estK}, the norm of the right-hand side in $L_2(0,x)$ is estimated by $C \de_k$. Considering $I + \mathfrak F$ as a boundedly invertible operator on $L_2(0,x)$, we conclude from \eqref{GLx} that
\begin{equation} \label{estKx}
\| K_x(x,.) \|_{L_2(0,x)} \le C \de_k, \quad x \in [0,T].
\end{equation}

Next, putting $t = x$ in \eqref{GL} and differentiating by $x$, we derive
\begin{equation} \label{GLx2}
\frac{d}{dx} K(x,x) + \frac{d}{dx} F(x,x) + K(x,x) F(x,x) + \int_0^x K_x(x,s) F(s,x) \, ds + \int_0^x K(x,s) F_x(s,x) \, ds = 0.
\end{equation}
Using \eqref{estF}--\eqref{estK} and \eqref{estKx}, we deduce from \eqref{GLx2} that
$$
\left\| \frac{d}{dx} K(x,x) \right\|_{L_2(0,T)} \le C \de_k,
$$
which together with \eqref{findq} readily implies \eqref{estq}.
\end{proof}

Note that the assertion of Theorem~\ref{thm:GLloc} in some aspects is more general than Theorem~\ref{thm:loc}. Specifically, the existence of $\tilde q$ is asserted in Theorem~\ref{thm:GLloc} but required in Theorem~\ref{thm:loc}. Furthermore, in Theorem~\ref{thm:GLloc}, the function $\tilde q$ is not supposed to be real-valued. At the same time, Theorem~\ref{thm:GLloc} does not answer the question whether $\tilde \Delta(\la)$ and $\tilde \Delta_k(\la)$ are the characteristic functions of some boundary value problems on the graph. In addition, it is not guaranteed that $\int\limits_0^T \tilde q(t) \, dt = 0$.
Anyway, under the requirements of Theorem~\ref{thm:loc}, Theorem~\ref{thm:GLloc} immediately yields the estimates \eqref{estqj} for $j = \overline{1,m}$. Thus, it remains to prove \eqref{estqj} for $j = 0$.

\section{Transition through the vertex} \label{sec:transit}

In this section, the stability of step~2 in Algorithm~\ref{alg:main} is investigated. Specifically, we use the recovered potentials $\{ q_j \}_1^m$ on the boundary edges together with the initially given characteristic functions for finding the spectral data related to the loop.

Without loss of generality, assume that $T_0 = 1$.
According to \eqref{intS} and \eqref{intdPW}, the functions $d(\la)$ and $h(\la)$ admit the representations
\begin{align} \label{intd}
& d(\la) = (a + a^{-1}) \cos \rho + \frac{1}{\rho} \int_0^1 D(t) \sin \rho t \, dt, \\ \label{inth}
& h(\la) = \frac{\sin \rho}{\rho} + \frac{1}{\rho^2} \int_0^1 K(t) \cos \rho t \, dt,
\end{align}
where $D, K \in L_{2,\mathbb R}(0,1)$. 
Since the function $h(\la)$ is analytic at zero, then $\int_0^1 K(t) \, dt = 0$.
Recall that $\{ \la_n \}_1^{\infty}$ are the zeros of $h(\la)$.

In this section, we focus on the following problem.

\begin{ap} \label{ap:loop}
Given $\{ q_j \}_1^{\infty}$, $\Delta(\la)$, and $\Delta_1(\la)$, find $D(t)$ and $\{ \la_n \}_1^{\infty}$.
\end{ap}

The functions $d(\la)$ and $h(\la)$ can be found from \eqref{defDelta} and \eqref{defDeltak} for $k = 1$ by Cramer's rule
\begin{equation} \label{cram}
d(\la) = 2 + \frac{E_1(\la)}{E(\la)}, \quad h(\la) = \frac{E_2(\la)}{a E(\la)},
\end{equation}
where
\begin{align} \label{defE}
& E(\la) := \Delta^{\Pi}(\la) \Delta_1^K(\la) - \Delta_1^{\Pi}(\la) \Delta^K(\la), \\ \label{defE1}
& E_1(\la) := \Delta(\la) \Delta_1^K(\la) - \Delta_1(\la) \Delta^K(\la), \\ \nonumber
& E_2(\la) := \Delta^{\Pi}(\la) \Delta_1(\la) - \Delta_1^{\Pi}(\la) \Delta(\la).
\end{align}

The relation \eqref{defE} can be simplified by using \eqref{defDPK1}--\eqref{defDPK3}:
\begin{equation} \label{Esimp}
E(\la) = -\left( \prod_{j = 2}^m S_j(T_j,\la) \right)^2.
\end{equation}
Hence, the zeros of $E(\la)$ coincide with the (doubled) zeros of $S_j(T_j,\la)$, which are real.

In order to proceed from $d(\la)$ and $h(\la)$ to the functions $D(t)$ and $K(t)$, respectively, it will be convenient to choose a sequence $\{ \nu_n \}_{-\infty}^{\infty}$ that $\{ \exp(i \nu_n t) \}_{-\infty}^{\infty}$ is a Riesz basis in $L_2(-1,1)$ and $E(\nu_n^2) \ne 0$, $n \in \mathbb Z$. For definiteness, fix $\al > 0$ and put
\begin{equation} \label{defnu}
\nu_n := \pi n + i \alpha, \quad n \in \mathbb Z.
\end{equation}

For a function $f \in L_2(-1,1)$, denote by $\{ \widehat f_n \}_{-\infty}^{\infty}$ the corresponding Fourier coefficients:
$$
\widehat f_n := \int_{-1}^1 f(t) \exp(i \nu_n t) \, dt.
$$

Due to the Riesz-basis property of the sequence $\{ \exp(i \nu_n t) \}_{-\infty}^{\infty}$, the following two-side inequality holds:
\begin{equation} \label{twoside}
\| f \|_{L_2(-1,1)} \asymp \| \{ \widehat f_n \}_{-\infty}^{\infty} \|_{l_2}, \quad \forall f \in L_2(-1,1).
\end{equation}

Additionally, we need the following two auxiliary lemmas about exponential Riesz bases.

\begin{lem} \label{lem:bess}
Suppose that $a, b, c > 0$.
Let $\{ \exp(i \rho_n x) \}$ be a Riesz basis in $L_2(-a,a)$
and $|\mbox{Im} \, \rho_n| \le c$. Then 
\begin{equation} \label{bess}
\forall f \in L_2(-b,b) \quad
\sum_n \left| \int_{-b}^b f(x) \exp(i \rho_n x) dx\right|^2 \le C \int_{-b}^b |f(x)|^2 \, dx.
\end{equation}
\end{lem}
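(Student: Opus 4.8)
The plan is to reduce the claim to a comparison between two exponential systems via a truncation/localization argument, exploiting that the interval $[-b,b]$ on which $f$ lives may differ from the interval $[-a,a]$ on which the Riesz-basis property is known. First I would recall that the Riesz-basis property of $\{ \exp(i \rho_n x) \}$ in $L_2(-a,a)$ is equivalent to the Bessel inequality
\begin{equation*}
\sum_n \left| \int_{-a}^a g(x) \exp(i \rho_n x) \, dx \right|^2 \le C \int_{-a}^a |g(x)|^2 \, dx, \quad \forall g \in L_2(-a,a),
\end{equation*}
which is one half of the two-sided Riesz-basis estimate and holds because a Riesz basis is in particular a Bessel system. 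The only genuine issue is therefore transferring this inequality from the interval $[-a,a]$ to the interval $[-b,b]$, since $b$ need not equal $a$.

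If $b \le a$, the transfer is immediate: given $f \in L_2(-b,b)$, extend it by zero to a function $g \in L_2(-a,a)$; then $\int_{-b}^b f(x) \exp(i \rho_n x)\,dx = \int_{-a}^a g(x)\exp(i\rho_n x)\,dx$ and $\|g\|_{L_2(-a,a)} = \|f\|_{L_2(-b,b)}$, so \eqref{bess} follows at once from the Bessel inequality above. For the case $b > a$ one cannot restrict, and here the hypothesis $|\mathrm{Im}\,\rho_n| \le c$ enters decisively. The strategy is to partition $[-b,b]$ into finitely many subintervals each of length at most $2a$ (say $N := \lceil b/a \rceil$ translated copies), write $f = \sum_{\ell} f_\ell$ with $f_\ell$ supported on the $\ell$-th subinterval $[x_\ell - a, x_\ell + a]$, and estimate each piece separately. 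For a piece centred at $x_\ell$, substitute $x = x_\ell + u$ with $u \in [-a,a]$; then $\exp(i\rho_n x) = \exp(i\rho_n x_\ell)\exp(i\rho_n u)$, and the factor $|\exp(i \rho_n x_\ell)| = \exp(-x_\ell \,\mathrm{Im}\,\rho_n) \le \exp(|x_\ell| c) \le \exp(b c)$ is bounded uniformly in $n$ precisely because $|\mathrm{Im}\,\rho_n| \le c$. Applying the Bessel inequality on $[-a,a]$ to the shifted piece and absorbing the bounded exponential factor into the constant gives $\sum_n |\int f_\ell \exp(i\rho_n x)\,dx|^2 \le C \|f_\ell\|_{L_2}^2$.

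Finally I would reassemble the estimate. Using $|\sum_\ell z_\ell|^2 \le N \sum_\ell |z_\ell|^2$ for the finitely many terms, summing over $n$, and then applying the per-piece bound and $\sum_\ell \|f_\ell\|_{L_2}^2 = \|f\|_{L_2(-b,b)}^2$ (the pieces have disjoint supports), one obtains \eqref{bess} with a constant $C$ depending on $a$, $b$, $c$, and the Riesz-basis constant, as required. The main obstacle is the case $b > a$: a naive zero-extension fails because there is no a priori control outside $[-a,a]$, and it is the uniform bound $|\mathrm{Im}\,\rho_n| \le c$ on the imaginary parts of the frequencies that makes the translation factors $\exp(i\rho_n x_\ell)$ harmless and thereby rescues the argument. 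The tilde-version for $\{ \exp(i\nu_n t) \}$ with $\nu_n = \pi n + i\alpha$ will be a direct instance, since there $|\mathrm{Im}\,\nu_n| = \alpha$ is constant.
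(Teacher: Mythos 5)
Your proposal is correct and follows essentially the same route as the paper's proof: zero-extension for $b \le a$, and for $b > a$ a decomposition of $[-b,b]$ into finitely many translates of an interval of length $2a$, with the translation factors $\exp(i\rho_n x_\ell)$ controlled uniformly via $|\mbox{Im}\,\rho_n| \le c$ and the pieces reassembled by Cauchy--Schwarz on the finite sum. The only cosmetic difference is that the paper normalizes $b = ka$ with $k \in \mathbb N$ instead of taking $N = \lceil b/a \rceil$ pieces.
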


\begin{proof}
For $a = b$, the inequality \eqref{bess} is a well-known property of a Riesz basis. For $b < a$, the function $f$ can be completed by zero to the interval $(-a,a)$.

If $b > a$, then without loss of generality assume that $b = ka$, $k \in \mathbb N$. Thus
\begin{align*}
\sum_n \left| \int_{-ka}^{ka} f(x) \exp(i \rho_n x)\right|^2 & \le
\sum_n k \sum_{j = 1}^k \left| \int_{-a}^a f(x+s_j) \exp(i \rho_n (x + s_j)) dx\right|^2 \\ & \le C \sum_{j = 1}^m \int_{-a}^a |f(x+s_j)|^2 dx = C \int_{-ka}^{ka} |f(x)|^2 \, dx,
\end{align*}
where $s_j := (2j-k-1)a$.
\end{proof}

\begin{lem} \label{lem:PW}
Let $\{ \nu_n \}_{-\infty}^{\infty}$ be given by \eqref{defnu}. Then
$$
\forall \mathcal F \in PW(T) \quad \| \{ \mathcal F(\nu_n) \}_{-\infty}^{\infty} \|_{l_2} \le C \| \mathcal F \|_{L_2(\mathbb R)},
$$
where the constant $C$ depends only on $\al$ and $T$ (possibly $T \ne 1$).
\end{lem}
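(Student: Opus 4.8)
The plan is to reduce the desired estimate to Lemma~\ref{lem:bess} by absorbing the imaginary shift $i\al$ in $\nu_n = \pi n + i\al$ into a bounded exponential weight on the finite interval $[-T,T]$. First I would take an arbitrary $\mathcal F \in PW(T)$, written as in \eqref{PW} with $f \in L_2(-T,T)$, and compute the values at the nodes $\nu_n$. Since $\exp(i\nu_n t) = \exp(i\pi n t)\exp(-\al t)$, we obtain
$$
\mathcal F(\nu_n) = \int_{-T}^T f(t)\exp(i \nu_n t)\,dt = \int_{-T}^T g(t)\exp(i\pi n t)\,dt, \qquad g(t) := f(t)\exp(-\al t).
$$
Because $\exp(-\al t)$ is bounded on $[-T,T]$, the function $g$ again belongs to $L_2(-T,T)$, so $\mathcal F(\nu_n)$ is exactly the coefficient of $g$ against the real-frequency exponential $\exp(i\pi n t)$.

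Next I would invoke Lemma~\ref{lem:bess} with the real nodes $\rho_n := \pi n$. The system $\{ \exp(i\pi n t) \}_{-\infty}^{\infty}$ is an orthogonal (hence Riesz) basis of $L_2(-1,1)$, and here $|\mbox{Im}\,\rho_n| = 0$, so the hypotheses of Lemma~\ref{lem:bess} hold with $a = 1$ and any $c > 0$. Applying the lemma with $b = T$ (the key point being that the Paley--Wiener assumption localizes $g$ in $[-T,T]$, which is precisely the setting $b \ne a$ covered by Lemma~\ref{lem:bess}) yields
$$
\| \{ \mathcal F(\nu_n) \}_{-\infty}^{\infty} \|_{l_2}^2 = \sum_n \left| \int_{-T}^T g(t)\exp(i\pi n t)\,dt \right|^2 \le C \| g \|_{L_2(-T,T)}^2,
$$
where $C$ depends only on $T$ (through the choice $b = T$).

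Finally I would return from $g$ to $f$ and then to $\mathcal F$. Since $|\exp(-\al t)| \le \exp(\al T)$ for $t \in [-T,T]$, we have $\| g \|_{L_2(-T,T)} \le \exp(\al T)\,\| f \|_{L_2(-T,T)}$, and Plancherel's identity \eqref{plan} gives $\| f \|_{L_2(-T,T)} = (2\pi)^{-1/2} \| \mathcal F \|_{L_2(\mathbb R)}$. Combining the three bounds produces the claimed inequality with a constant depending only on $\al$ and $T$. I do not expect a genuine obstacle here: the only substantive observation is that a complex shift of the sampling nodes is harmless on a finite interval, contributing merely the bounded weight $\exp(-\al t)$, after which the statement is a direct application of Lemma~\ref{lem:bess} to the standard Fourier system together with Plancherel's theorem.
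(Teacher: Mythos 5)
Your proof is correct and follows essentially the same route as the paper: both reduce the claim to Lemma~\ref{lem:bess} with $a=1$, $b=T$ and then finish with Plancherel's identity \eqref{plan}. The only cosmetic difference is that the paper applies Lemma~\ref{lem:bess} directly to the complex nodes $\nu_n$ (using that $\{\exp(i\nu_n t)\}$ is a Riesz basis in $L_2(-1,1)$), whereas you absorb the shift $i\al$ into the bounded weight $\exp(-\al t)$ and apply the lemma to the real Fourier system --- which is precisely the standard argument showing that shifted system is a Riesz basis in the first place.
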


\begin{proof}
Recall that any Paley-Wiener function $\mathcal F(\rho)$ is represented by \eqref{PW} with $f \in L_2(-T,T)$ and $\{ \exp(i \nu_n t) \}_{-\infty}^{\infty}$ is a Riesz basis in $L_2(-1,1)$. Then, by Lemma~\ref{lem:PW}, we get
$$
\| \{ \mathcal F(\nu_n) \}_{-\infty}^{\infty} \|_{l_2} \le C \| f \|_{L_2(-T,T)},
$$
which together with \eqref{plan} conclude the proof.
\end{proof}

Next, let us formulate and prove two lemmas on the local stability of Auxiliary Problem~\ref{ap:loop}.

\begin{lem} \label{lem:stabaux}
Let $\mathcal L = \mathcal L(\mathbf{q})$ be a boundary value problem of form \eqref{eqv}--\eqref{mc} with $\mathbf{q} \in L_{2,\mathbb R}^0(G)$. Then, there exists $\eps > 0$ (depending on $\mathcal L$) such that, for any $\tilde{\mathbf q} \in L_{2,\mathbb R}^0(G)$ satisfying
\begin{equation} \label{esteps}
\epsilon := \| \rho^{m+1} \hat \Delta(\rho^2) \|_{L_2(\mathbb R)} + 
\| \rho^m \hat \Delta_1(\rho^2) \|_{L_2(\mathbb R)} + \sum_{j = 1}^m \| \hat q_j \|_{L_2(0,T_j)} \le \eps,
\end{equation}
there holds
\begin{equation} \label{estDK}
\| \hat D \|_{L_2(0,1)} + \| \hat K \|_{L_2(0,1)} \le C \epsilon,
\end{equation}
where the constant $C$ depends only on $\mathcal L$.
\end{lem}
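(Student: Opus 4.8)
The plan is to reduce the $L_2$-bounds for $\hat D$ and $\hat K$ to $l_2$-bounds for the samples of $\hat d$ and $\hat h$ at the Riesz-basis nodes $\{ \nu_n \}$, and then to control those samples via the Cramer's rule formulas \eqref{cram}--\eqref{defE1} together with the perturbation estimates of Section~\ref{sec:prelim} (recall $T_0 = 1$). First I would pass from functions to samples: extending $\hat D$ oddly and $\hat K$ evenly to $(-1,1)$ and using \eqref{intd}--\eqref{inth}, the potential-independent terms $(a+a^{-1})\cos\rho$ and $\sin\rho / \rho$ cancel in the differences, giving $\int_0^1 \hat D(t)\sin\nu_n t\,dt = \nu_n \hat d(\nu_n^2)$ and $\int_0^1 \hat K(t)\cos\nu_n t\,dt = \nu_n^2 \hat h(\nu_n^2)$. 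Since $\{ \exp(i\nu_n t) \}$ is a Riesz basis in $L_2(-1,1)$, the two-side inequality \eqref{twoside} yields $\| \hat D \|_{L_2(0,1)} \asymp \| \{ \nu_n \hat d(\nu_n^2) \} \|_{l_2}$ and $\| \hat K \|_{L_2(0,1)} \asymp \| \{ \nu_n^2 \hat h(\nu_n^2) \} \|_{l_2}$, so it remains to bound these two sequences by $C\epsilon$.

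Next, from \eqref{cram} and the identities $E_1/E = d-2$, $E_2/(aE) = h$, I would derive at the nodes $\nu_n^2$ (where $E(\nu_n^2) \ne 0$, and $\tilde E(\nu_n^2) \ne 0$ for small $\epsilon$) the perturbation formulas $\hat d = \tilde E^{-1}(\hat E_1 - (d-2)\hat E)$ and $\hat h = (a\tilde E)^{-1}(\hat E_2 - a h\, \hat E)$. On $\gamma$ one has, uniformly for $\tilde{\mathbf q}$ in a fixed ball (guaranteed by smallness of $\epsilon$ and Corollary~\ref{cor:stabDelta}), the two-sided bound $\tilde E(\rho^2) \asymp \rho^{-2(m-1)}$ from \eqref{Esimp} and \eqref{asymptS}, together with $|d-2| \le C$ and $|h(\rho^2)| \le C|\rho|^{-1}$. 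Hence $|\nu_n \hat d(\nu_n^2)| \le C|\nu_n|^{2m-1}(|\hat E_1(\nu_n^2)| + |\hat E(\nu_n^2)|)$ and $|\nu_n^2 \hat h(\nu_n^2)| \le C|\nu_n|^{2m}|\hat E_2(\nu_n^2)| + C|\nu_n|^{2m-1}|\hat E(\nu_n^2)|$, so it suffices to show that $\| \{ \nu_n^{2m-1}\hat E(\nu_n^2) \} \|_{l_2}$, $\| \{ \nu_n^{2m-1}\hat E_1(\nu_n^2) \} \|_{l_2}$, and $\| \{ \nu_n^{2m}\hat E_2(\nu_n^2) \} \|_{l_2}$ are all $\le C\epsilon$.

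Each of these I would handle by expanding $\hat E_1$ and $\hat E_2$ from \eqref{defE}--\eqref{defE1} through the product rule $\widehat{AB} = \hat A\, B + \tilde A\, \hat B$, and using $E = -(\prod_{j=2}^m S_j)^2$ from \eqref{Esimp} for $\hat E$, whose perturbation factor is $\hat S_j$. In every resulting product the non-perturbed factors are estimated on $\gamma$ by the appropriate power of $|\nu_n|$ via \eqref{est2side} and Corollary~\ref{cor:DPK}, while the single perturbation factor, after multiplication by the designated power of $\nu_n$, becomes the restriction to $\nu_n$ of one of the Paley-Wiener functions $\rho^{m+1}\hat\Delta(\rho^2)$, $\rho^m\hat\Delta_1(\rho^2)$, $\rho^{m+1}\hat\Delta^{\Pi}(\rho^2)$, $\rho^m\hat\Delta^K(\rho^2)$, $\rho^m\hat\Delta_1^{\Pi}(\rho^2)$, $\rho^{m-1}\hat\Delta_1^K(\rho^2)$, or $\rho^2\hat S_j(T_j,\rho^2)$. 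Lemma~\ref{lem:PW} converts the sampling $l_2$-norm into an $L_2(\mathbb R)$-norm, which is $\le C\epsilon$ by the hypothesis \eqref{esteps}, Corollary~\ref{cor:stabDPK}, and Lemma~\ref{lem:stabSC}; a Cauchy-Schwarz over the finitely many summands then finishes the bound.

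The main obstacle is the bookkeeping of orders: one must verify that, after multiplying by the prescribed power of $\nu_n$, each mixed product collapses to exactly a controlled Paley-Wiener remainder with no leftover power of $\rho$. This succeeds precisely because the leading, potential-independent parts of the characteristic functions cancel in each difference $\hat{(\cdot)}$, so that $\rho^{m-1}\hat\Delta_1^K(\rho^2)$, $\rho^{m+1}\hat\Delta^{\Pi}(\rho^2)$, and the like are genuine $PW(\mathbf{T}-T_0)$ functions (equal to $\hat\varkappa_1^K$, $\hat\varkappa^{\Pi}$, etc.) rather than merely $O(1)$. The smallness of $\epsilon$ enters only to keep $\tilde{\mathbf q}$ in a fixed ball, so that the denominators $\tilde E(\nu_n^2)$ stay bounded away from zero and the two-sided estimate $\tilde E(\rho^2) \asymp \rho^{-2(m-1)}$ holds with constants depending only on $\mathcal L$.
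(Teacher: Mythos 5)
Your proposal is correct and follows essentially the same route as the paper's proof: sampling the differences $\hat d$ and $\hat h$ at the Riesz-basis nodes $\nu_n$ from \eqref{defnu}, inverting via \eqref{twoside}, estimating the Cramer's-rule quotients through the two-sided bound $E(\mu_n) \asymp \nu_n^{-2(m-1)}$ from \eqref{Esimp}, and controlling each perturbed factor in $\hat E$, $\hat E_1$, $\hat E_2$ by Lemma~\ref{lem:PW} applied to the Paley--Wiener remainders together with Lemma~\ref{lem:stabSC} and Corollary~\ref{cor:stabDPK}. The only differences are cosmetic: you write the explicit perturbation identity $\hat d = \tilde E^{-1}(\hat E_1 - (d-2)\hat E)$ where the paper differences the quotients directly, and you spell out the $E_2$ bookkeeping that the paper dispatches with ``analogously.''
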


\begin{proof}
Suppose that $\mathcal L$ and $\tilde{\mathbf{q}}$ satisfy the hypothesis of this lemma with some $\eps \le 1$. 
Note that
$$
\int_0^1 D(t) \sin \rho t \, dt = \int_{-1}^1 \mathscr D(t) \exp(i \rho t) \, dt,  
$$
where
$$
\mathscr D(t) := \frac{1}{2 i}\begin{cases} D(t), \quad t > 0, \\
                                        -D(-t), \quad t < 0,
                                        \end{cases},
\quad \mathscr D \in L_2(-1,1).                                       
$$

Consider the Fourier coefficients
$$
\widehat{\mathscr D}_n = \int_{-1}^1 \mathscr D(t) \exp(i \nu_n t) \, dt, \quad
\widehat{\tilde{\mathscr D}}_n = \int_{-1}^1 \tilde{\mathscr D}(t) \exp(i \nu_n t) \, dt, \quad n \in \mathbb Z.
$$

Denote $\mu_n := \nu_n^2$.
By virtue of \eqref{intd} and \eqref{cram}, we have
\begin{equation} \label{relDn}
\widehat{\mathscr D}_n - \widehat{\tilde{\mathscr D}}_n = \nu_n \left( \frac{E_1(\mu_n)}{E(\mu_n)} - \frac{\tilde E_1(\mu_n)}{\tilde E(\mu_n)}\right).
\end{equation}

Let us estimate the right-hand side of \eqref{relDn}. 

\smallskip

\textit{Estimates for $E(\mu_n)$}. According to parts (i) and (ii) of Proposition~\ref{prop:StL}, the zeros of the functions
$S_j(T_j,\la)$ ($j = \overline{1,m}$) are real and bounded from below. If $\al > 0$ in \eqref{defnu} is large enough, then $S_j(T_j,\mu_n) \ne 0$ for $n \in \mathbb Z$, $j = \overline{1,m}$. Taking the asymptotics \eqref{asymptS} into account, we conclude that
\begin{equation} \label{estSj}
S_j(T_j, \mu_n) \asymp \nu_n^{-1}, \quad n \in \mathbb Z, \quad j = \overline{1,m}.
\end{equation}

Substituting \eqref{estSj} into \eqref{Esimp}, we get
\begin{equation} \label{estE}
E(\mu_n) \asymp \nu_n^{-2(m-1)}, \quad n \in \mathbb Z.
\end{equation}
If $\eps$ is small enough, then the two-sided estimate similar to \eqref{estE} also holds for $\tilde E(\mu_n)$. Since $\{ q_j \}_1^m$ are fixed and $\{ \tilde q_j \}_1^m$ satisfy \eqref{esteps} with $\eps \le 1$, then the inequalities \eqref{ballQ} hold for some $Q > 0$ (depending only on $\mathbf{q}$) and $j = \overline{1,m}$. Consequently, Lemmas~\ref{lem:stabSC} and~\ref{lem:PW} imply
\begin{equation} \label{estSjn}
\bigl\| \{ \nu_n^2 \hat S_j(T_j,\mu_n) \}_{-\infty}^{\infty} \bigr\|_{l_2} \le C \| \rho^2 \hat S_j(T_j, \rho^2) \|_{L_2(\mathbb R)} \le C \| \hat q_j \|_{L_2(0,T_j)}.
\end{equation}

Using \eqref{Esimp} and \eqref{estSj}, we obtain
$$
|\hat E(\mu_n)| \le C |\nu_n|^{-(2m-3)} \sum_{j = 2}^m |\hat S(T_j,\mu_n)|.
$$
Taking \eqref{estSjn} and \eqref{esteps} into account, we deduce
\begin{equation} \label{estEn}
\| \{ \nu^{2m-1} \hat E(\mu_n) \}_{-\infty}^{\infty} \|_{l_2} \le C \epsilon.
\end{equation}

\smallskip

\textit{Estimates for $E_1(\mu_n)$}.
It follows from the asymptotics \eqref{asymptDelta}, \eqref{asymptDeltak} and Corollary~\ref{cor:DPK} that
\begin{align*}
& |\Delta(\mu_n)| \le C |\nu_n|^{-m}, \quad
|\Delta_1(\mu_n)| \le C |\nu_n|^{-(m-1)}, \\
& |\Delta^K(\mu_n)| \le C |\nu_n|^{-(m-1)}, \quad
|\Delta^K_1(\mu_n)| \le C |\nu_n|^{-(m-2)}
\end{align*}
for $n \in \mathbb Z$. Hence \eqref{defE1} implies
\begin{equation} \label{estE1}
|E_1(\mu_n)| \le C |\nu_n|^{-2(m-1)}, \quad n \in \mathbb Z.
\end{equation}

Applying Lemma~\ref{lem:PW} to the Paley-Wiener functions $\rho^{m+1}\hat \Delta(\rho^2)$ and $\rho^m \hat \Delta_1(\rho^2)$ and taking \eqref{esteps} into account, we get
$$
\| \{ \nu_n^{m+1} \hat \Delta(\mu_n) \}_{-\infty}^{\infty} \|_{l_2} \le C \epsilon, \quad \| \{ \nu_n^m \hat \Delta_1(\mu_n) \}_{-\infty}^{\infty} \|_{l_2} \le C \epsilon.
$$
Analogously, using Corollary~\ref{cor:stabDPK}, we estimate $\hat \Delta^K(\mu_n)$ and $\hat \Delta_1^K(\mu_n)$. Finally, we get
\begin{equation} \label{estE1n}
\| \{ \nu_n^{-(2m-1)}\hat E_1(\mu_n) \}_{-\infty}^{\infty} \|_{l_2} \le C \epsilon.
\end{equation}

\smallskip

Combining \eqref{relDn}, \eqref{estE}, \eqref{estEn}, \eqref{estE1}, and \eqref{estE1n}, we arrive at the estimate
$$
\| \{ \widehat{\mathscr D}_n - \widehat{\tilde{\mathscr D}}_n \}_{-\infty}^{\infty} \|_{l_2} \le C \epsilon.
$$
Thus \eqref{twoside} implies 
$$
\| \mathscr D - \tilde{\mathscr D} \|_{L_2(-1,1)} \le C \epsilon,
$$
so
$\| \hat D \|_{L_2(0,1)} \le C \epsilon$. Analogously, we obtain the estimate $\| \hat K \|_{L_2(0,1)} \le C \epsilon$,
which concludes the proof.
\end{proof}

Clearly, in Auxiliary Problem~\ref{ap:loop} and in Lemma~\ref{lem:stabaux}, the function $\Delta_1(\la)$ can be replaced by $\Delta_k(\la)$ for every $k \in \{ 2, 3, \dots, m \}$.

\begin{lem} \label{lem:stabla}
Let $h(\la)$ be any function of form \eqref{inth} with simple zeros $\{ \la_n \}_1^{\infty}$ and $h(0) \ne 0$. Then, there exists $\eps > 0$ such that, for any $\tilde K \in L_2(0,1)$ satisfying 
$$
\int_0^1 \tilde K(t) \, dt = 0, \quad \| \hat K \|_{L_2(0,1)} \le \eps,
$$
the function 
\begin{equation} \label{intht}
\tilde h(\la) := \frac{\sin \rho}{\rho} + \frac{1}{\rho^2} \int_0^1 \tilde K(t) \cos \rho t \, dt
\end{equation}
has only simple zeros $\{ \tilde \la_n \}_1^{\infty}$, and there holds
\begin{equation} \label{estrho}
\| \{ n \hat \rho_n \}_1^{\infty} \|_{l_2} \le C \| \hat K \|_{L_2(0,1)},
\end{equation}
where $\rho_n := \sqrt{\la_n}$, $\tilde \rho_n := \sqrt{\tilde \la_n}$, $\arg \rho_n, \arg \tilde \rho_n \in (-\tfrac{\pi}{2}, \tfrac{\pi}{2}]$, and the constant $C$ depends only on $K$.
\end{lem}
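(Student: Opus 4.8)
The plan is to pass from $h$ and $\tilde h$ to the auxiliary entire functions
$$
F(\rho) := \rho\, h(\rho^2) = \sin \rho + \frac{1}{\rho}\int_0^1 K(t)\cos \rho t\, dt, \qquad \tilde F(\rho) := \rho\, \tilde h(\rho^2),
$$
whose zeros (apart from the simple zero at $\rho = 0$ forced by $h(0)\ne 0$, which does not enter the spectrum) are exactly $\{\pm \rho_n\}_{n\ge 1}$ and $\{\pm\tilde\rho_n\}_{n\ge 1}$. The decisive structural point is that the leading terms $\sin\rho$ cancel in the difference, so that
$$
\hat F(\rho) := F(\rho) - \tilde F(\rho) = \frac{1}{\rho}\int_0^1 \hat K(t)\cos\rho t\, dt,
$$
a quantity directly governed by $\hat K$. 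The standard analysis of the zeros of functions of the form \eqref{inth} (cf.\ the asymptotics \eqref{asymptlanj} and Proposition~\ref{prop:StL}(i)) gives $\rho_n = \pi n + \kappa_n$ with $\{\kappa_n\}\in l_2$, so that $\{\exp(i\rho_n t)\}_{n\in\mathbb Z}$ (with $\rho_{-n} = -\rho_n$, $\rho_0 = 0$) is a Riesz basis in $L_2(-1,1)$; this is precisely what will allow me to invoke Lemma~\ref{lem:bess}.

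First I would settle localization and simplicity. Fixing a small $\delta > 0$ and applying Rouch\'e's theorem on the circles $|\rho - \pi n| = \delta$, where $|\sin\rho|$ is bounded below while the remainder $\tfrac1\rho\int_0^1 K\cos\rho t\,dt = O(1/n)$ is small, I get for all large $n$ exactly one (hence simple) zero of both $F$ and $\tilde F$ in each disk, and a contour argument on large circles excludes stray zeros. For the finitely many remaining indices, the simplicity of the $\rho_n$ together with $\tilde F \to F$ as $\eps \to 0$ (Hurwitz) yields, for $\eps$ small depending on $K$, a unique simple $\tilde\rho_n$ near each $\rho_n$. This proves the simplicity of $\{\tilde\la_n\}$ and furnishes a uniform lower bound $\inf_n |F'(\rho_n)| \ge 2c > 0$ (for large $n$ from $F'(\rho_n) = \cos\rho_n + O(1/n) = \pm 1 + o(1)$, for small $n$ from simplicity and finiteness), with $c$ depending only on $K$.

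The estimate \eqref{estrho} is the core. Expanding $\tilde F$ about the fixed point $\rho_n$ and using $\tilde F(\rho_n) = \tilde F(\rho_n) - F(\rho_n) = -\hat F(\rho_n)$ together with $\tilde F(\tilde\rho_n) = 0$, I obtain
$$
\tilde F'(\rho_n)\,\hat\rho_n = -\hat F(\rho_n) + \tfrac12\,\tilde F''(\xi_n)\,\hat\rho_n^2 ,
$$
where $\hat\rho_n = \rho_n - \tilde\rho_n$ and $\xi_n$ lies on the segment joining the two zeros. Since $|\tilde F''|$ is uniformly bounded (by some $M$) on the localizing disks and $|\hat\rho_n|\le 2\delta$, while $|\tilde F'(\rho_n)| \ge |F'(\rho_n)| - C\eps \ge c$ for $\eps$ small, choosing $\delta$ so that $M\delta \le c/2$ lets me absorb the quadratic term and conclude $|\hat\rho_n| \le C|\hat F(\rho_n)|$ uniformly in $n$. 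As $\hat F(\rho_n) = \tfrac1{\rho_n}\int_0^1 \hat K\cos\rho_n t\,dt$ and $n/|\rho_n|$ is bounded, this gives $n|\hat\rho_n| \le C\,\bigl|\int_0^1 \hat K\cos\rho_n t\,dt\bigr|$. Extending $\hat K$ evenly to $(-1,1)$ to pass from cosines to the exponential system $\{\exp(i\rho_n t)\}$ and summing, the Bessel inequality of Lemma~\ref{lem:bess} yields $\sum_n n^2|\hat\rho_n|^2 \le C\|\hat K\|_{L_2(0,1)}^2$, which is \eqref{estrho}.

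The main obstacle is to make the linearization uniform in $n$: the quadratic remainder must be negligible against a single lower bound on $|F'(\rho_n)|$ valid for all indices, which forces the joint choice of the fixed radius $\delta$ and the smallness threshold $\eps$; and the coefficients $\int_0^1 \hat K\cos\rho_n t\,dt$ must be controlled through one Bessel bound, which rests on the Riesz-basis property of $\{\exp(i\rho_n t)\}$. The remaining ingredients (Rouch\'e localization and the simplicity argument) are routine.
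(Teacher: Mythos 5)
Your argument is correct and follows essentially the same route as the paper: Rouch\'e/Hurwitz localization of the perturbed zeros followed by a Taylor linearization of the characteristic function at each $\rho_n$ (your normalization $F(\rho)=\rho h(\rho^2)$ versus the paper's direct work with $h(\la)$ is only cosmetic). The one organizational difference is that you make the tail bound $\sum_{n\ge n_1}(n|\hat\rho_n|)^2\le C\|\hat K\|^2_{L_2(0,1)}$ self-contained, absorbing the quadratic remainder uniformly in $n$ and summing via the Bessel inequality of Lemma~\ref{lem:bess} for $\{\exp(i\rho_n t)\}$, whereas the paper delegates exactly this step to the proof of \cite[Lemma~6.3]{Bond20} and treats only the finitely many initial indices by hand.
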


\begin{proof}
Let $\rho_n$ be a zero of $h(\rho^2)$. Then, there exists a contour $\gamma_n := \{ \rho \in \mathbb C \colon |\rho - \rho_n| = r \}$ ($r > 0$) that does not contain any other zeros $\rho_k \ne \rho_n$ and $\rho = 0$ inside of it. Consequently, there exists $\eps > 0$ such that, for
$\| \hat K \|_{L_2(0,1)} < \eps$, there holds
$$
\frac{|h(\rho^2) - \tilde h(\rho^2)|}{|h(\rho^2)|} < 1, \quad \rho \in \gamma_n.
$$
Rouche's Theorem implies that $\tilde h(\rho^2)$ has the single zero $\tilde \rho_n$ inside $\ga_n$. Moreover, $\tilde \rho_n \to \rho_n$ as $\eps \to 0$.

Obviously, we have
\begin{equation} \label{sm2}
h(\la_n) - \tilde h(\la_n) = \tilde h(\tilde \la_n) - \tilde h(\la_n).
\end{equation}
It follows from \eqref{inth} and \eqref{intht} that
\begin{equation} \label{sm3}
h(\la_n) - \tilde h(\la_n) = \frac{1}{\rho_n^2} \int_0^1 \hat K(t) \cos \rho_n t \, dt.
\end{equation}
Taylor's formula implies
\begin{equation} \label{sm4}
\tilde h(\tilde \la_n) - \tilde h(\la_n) = \tfrac{d}{d\rho} \tilde h(\rho_n^2)(\tilde \rho_n - \rho_n) + O\bigl((\rho_n - \tilde \rho_n)^2\bigr), \quad \eps \to 0.
\end{equation}
Since $\rho_n$ is simple, then there holds $\tfrac{d}{d\rho} h(\rho_n^2) \ne 0$ and so $\tfrac{d}{d\rho} \tilde h(\rho_n^2) \ne 0$ for sufficiently small $\eps$. Therefore, combining \eqref{sm2}, \eqref{sm3}, and \eqref{sm4}, we obtain
\begin{equation} \label{rhon1}
|\hat \rho_n| \le C \| \hat  K \|_{L_2(0,1)},
\end{equation}
where $C$ depends on $K$ and $n$. 

According to the proof of \cite[Lemma~6.3]{Bond20}, there exists $n_1 \in \mathbb N$ such that
$$
\sqrt{\sum_{n = n_1}^{\infty} (n |\hat \rho_n|)^2} \le C \| \hat K \|_{L_2(0,1)},
$$
where $n_1$ and $C$ depend only on $K$. Combining the latter estimate with \eqref{rhon1} for $n = \overline{1,n_1-1}$, we arrive at \eqref{estrho}.
\end{proof}

Thus, Lemmas~\ref{lem:stabaux} and~\ref{lem:stabla} together imply the stability of Auxiliary Problem~\ref{ap:loop}.
We point out that Lemma~\ref{lem:stabaux} lacks a solvability assertion, since the solution of Auxiliary Problem~\ref{ap:loop} may not exist for an arbitrarily small perturbation of the given data. Indeed, not for every perturbation of $\Delta(\la)$ and $\Delta_1(\la)$, the solution $d(\la)$ and $h(\la)$ of the system \eqref{defDelta}--\eqref{defDeltak} is entire in $\la$. In general, Auxiliary Problem~\ref{ap:loop} is overdetermined. In other words, under the assumption that the potentials $\{ q_j \}_1^m$ are known a priori, the potential $q_0$ can be recovered from a smaller amount of data than $\Delta(\la)$, $\Delta_1(\la)$, and the signs $\{ \sigma_n \}_1^{\infty}$. For example, it has been shown in \cite{BS20} that, for a slightly different Sturm-Liouville operator (in fact, for a pencil) on the graph $G$ with equal edge lengths, a fractional $\frac{2}{m+1}$-th part of the spectrum $\Lambda$ and a suitable sequence of signs are sufficient for the unique reconstruction of $q_0$. Alternatively, the proof of Lemma~\ref{lem:stabaux} 
shows that the data $\{ \Delta(\mu_n), \Delta_1(\mu_n) \}_{-\infty}^{\infty}$ and $\{ q_j \}_1^m$ uniquely specify the functions $D$ and $K$. Moreover, the corresponding problem is solvable for sufficiently small perturbations $\{ \nu_n^{m+1} \hat \Delta(\mu_n) \}_{-\infty}^{\infty}$ and $\{ \nu_n^m \hat \Delta_1(\mu_n) \}_{-\infty}^{\infty}$ in $l_2$ and
$\hat q_j$ in $L_2(0,T_j)$ ($j = \overline{1,m}$). 

\section{Quasiperiodic problem} \label{sec:quasi}

Denote by $\mathscr L := \mathscr L(q_0, a)$ the boundary value problem for equation \eqref{eqv} ($j = 0$, $T_0 = 1$) with the boundary conditions \eqref{bcquasi}. We call the Hill-type discriminant $d(\la)$ \eqref{defd}, the Dirichlet spectrum $\{ \la_n \}_1^{\infty}$, and the sequence of signs $\{ \sigma_n \}_1^{\infty}$ \eqref{defsi} the spectral data of the problem $\mathscr L$.

This section is focused on the following quasiperiodic inverse Sturm-Liouville problem on the loop, which arises at step~3 of Algorithm~\ref{alg:main}.

\begin{ip} \label{ip:quasi}
Given the spectral data $d(\la)$, $\{ \la_n \}_1^{\infty}$, $\{ \sigma_n \}_1^{\infty}$, find the potential $q_0$.
\end{ip}

The uniqueness of solution for Inverse Problem~\ref{ip:quasi} actually follows from the results of \cite{Plak88}. Constructive procedures for solving inverse spectral problems, more general than Inverse Problem~\ref{ip:quasi}, have been developed by Yurko in \cite{Yur08, Yur12, Yur16-caot, Yur20} and other studies. Here, we need the local stability of the quasiperiodic inverse problem from \cite{Bond25-quasi}.

\begin{prop} [\hspace*{-3pt}\cite{Bond25-quasi}, Theorem 2.5] \label{prop:quasi}
Suppose that $q_0 \in L^0_{2,\mathbb R}(0,1)$ and $a \in \mathbb R \setminus \{ -1, 0, 1 \}$ are fixed. Let $d(\la)$, $\{ \la_n \}_1^{\infty}$, $\{ \sigma_n \}_1^{\infty}$ be the spectral data of the problem $\mathscr L(q_0,a)$. Let $D(t)$ be the $L_2$-function from the relation \eqref{intd} and $\rho_n = \sqrt{\la_n}$, $\arg \rho_n \in (-\tfrac{\pi}{2}, \frac{\pi}{2}]$, $n \ge 1$. Then, there exists $\eps > 0$ (depending on $q_0$ and $a$) such that, for any $\tilde D \in L_{2,\mathbb R}(0,1)$ and any reals $\{ \tilde \rho_n^2 \}_1^{\infty}$ satisfying the inequalities
$$
    \| \hat D \|_{L_2(0,1)} \le \eps, \quad \| \{ n \hat \rho_n \}_1^{\infty} \|_{l_2} \le \eps
$$
and the condition
\begin{equation} \label{condsi}
\tilde d(\tilde \rho_n^2) = \pm 2 \quad \text{iff} \quad \sigma_n = 0,
\end{equation}
there exists a unique $\tilde q_0 \in L^0_{2,\mathbb R}(0,1)$ such that
$\tilde d(\la)$, $\{ \tilde \la_n \}_1^{\infty}$, $\{ \sigma_n \}_1^{\infty}$ are the spectral data of $\mathscr L(\tilde q_0, a)$, where
$$
\tilde d(\la) := (a + a^{-1}) \cos \rho + \frac{1}{\rho^2} \int_0^1 \tilde D(t) \sin \rho t \, dt, \quad \tilde \la_n := \tilde \rho_n^2.
$$
Moreover, 
\begin{equation} \label{estq0}
\| \hat q_0 \|_{L_2(0,1)} \le C \left( \| \hat D \|_{L_2(0,1)} + \| \{ n \hat \rho_n \}_1^{\infty} \|_{l_2} \right),
\end{equation}
where the constant $C$ depends only on $q_0$ and $a$.
\end{prop}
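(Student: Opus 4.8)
The plan is to reconstruct $q_0$ from the recovered monodromy data and then to transfer the stability from a Gelfand--Levitan-type main equation, following the scheme of Section~\ref{sec:bound} but adapted to the quasiperiodic configuration. The starting observation is a Wronskian identity at the Dirichlet eigenvalues. Since $h(\la_n) = S_0(1,\la_n) = 0$ and $C_0(1,\la)S_0'(1,\la) - S_0(1,\la)C_0'(1,\la) \equiv 1$, one has $C_0(1,\la_n)S_0'(1,\la_n) = 1$. With $d(\la) = aC_0(1,\la)+a^{-1}S_0'(1,\la)$ from \eqref{defd} and $H(\la) = aC_0(1,\la)-a^{-1}S_0'(1,\la)$ from \eqref{defsi}, this gives the key relation
\[ d(\la_n)^2 - H(\la_n)^2 = 4\bigl(aC_0(1,\la_n)\bigr)\bigl(a^{-1}S_0'(1,\la_n)\bigr) = 4, \]
so that $H(\la_n) = \sigma_n\sqrt{d(\la_n)^2-4}$. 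Thus the data $d(\la)$, $\{\la_n\}$, $\{\sigma_n\}$ determine the values $H(\la_n)$, together with $d$ everywhere and $h(\la)$ (recovered from $\{\la_n\}$ by Hadamard factorization, the leading term being fixed by \eqref{inth}). The sign sequence selects which Floquet multiplier $\xi_\pm(\la) = \tfrac12\bigl(d(\la)\pm\sqrt{d(\la)^2-4}\bigr)$ sits on which sheet of the Riemann surface of $\sqrt{d^2-4}$.

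First I would introduce a Weyl-type meromorphic function $M_0(\la)$ for $\mathscr L$, defined through the Floquet solution $\psi = C_0 + M_0 S_0$ normalized by $\psi(0,\la)=1$. A direct computation of the eigenvector of $\mathrm{diag}(a,a^{-1})$ times the monodromy matrix gives
\[ M_0(\la) = \frac{-H(\la)\pm\sqrt{d(\la)^2-4}}{2a\,h(\la)}, \]
so that $M_0$ has simple poles at a subset of $\{\la_n\}$ selected by the signs $\sigma_n$, with residues controlled by $\sqrt{d(\la_n)^2-4}$ and $h'(\la_n)$. Following the contour-integral technique behind \eqref{maineq}--\eqref{defF}, the difference $\hat M_0 = M_0 - \tilde M_0$ then yields a main equation of the form \eqref{GL} with
\[ F(x,t) = -\frac{1}{2\pi i}\int_\Gamma \hat M_0(\mu)\,S_0(x,\mu)\,S_0(t,\mu)\,d\mu \]
for the kernel $K$ of the transformation operator carrying $S_0$ into $\tilde S_0$, so that $\tilde q_0 = q_0 + 2\tfrac{d}{dx}K(x,x)$. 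Evaluating the residues rewrites $F$ as a series over $n$ whose ingredients are controlled by $\hat D$ (through $\hat d$, via \eqref{intd}) and by the eigenvalue shifts $\hat\rho_n$.

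Then I would prove local solvability and stability exactly as in Theorem~\ref{thm:GLloc}: estimate $\max_{x,t}|F|$, $F_x$, $F_t$, and $\tfrac{d}{dx}F(x,x)$ in the relevant norms by $C\bigl(\|\hat D\|_{L_2(0,1)}+\|\{n\hat\rho_n\}\|_{l_2}\bigr)$; invert $I+\mathfrak F$ on $C[0,x]$ and on $L_2(0,x)$ for $\eps$ small; and run the differentiation argument \eqref{GLx}--\eqref{GLx2} to obtain $\bigl\|\tfrac{d}{dx}K(x,x)\bigr\|_{L_2(0,1)} \le C\bigl(\|\hat D\|_{L_2(0,1)}+\|\{n\hat\rho_n\}\|_{l_2}\bigr)$, which is precisely \eqref{estq0}. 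Uniqueness of $\tilde q_0$, and the fact that $\tilde d$, $\{\tilde\la_n\}$, $\{\sigma_n\}$ are genuinely the spectral data of $\mathscr L(\tilde q_0,a)$, follow by verifying that the reconstructed $\tilde S_0$ solves the initial value problem and that the reconstructed monodromy entries respect the Wronskian and asymptotic constraints; realness of $\tilde q_0$ comes from the symmetry of the data.

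The hard part will be the control of $\hat M_0$ near the branch points of $\sqrt{d^2-4}$, i.e. at the indices with $\sigma_n = 0$, where $d(\la_n) = \pm 2$ and the two Floquet multipliers collide. There $H(\la_n)=0$, the residue of $M_0$ degenerates, and the map $(d,\la_n,\sigma_n)\mapsto H(\la_n)$ loses its naive Lipschitz bound because $\sqrt{d(\la_n)^2-4}$ is only H\"older at these points. The compatibility hypothesis \eqref{condsi}, namely $\tilde d(\tilde\rho_n^2)=\pm 2$ iff $\sigma_n=0$, is exactly what forces the perturbed data to degenerate at the same places; combined with a local analysis of $\sqrt{d^2-4}$ that extracts the factor $(\la-\la_n)$ cancelling the apparent singularity, it restores the linear bound $\|\hat M_0(\rho^2)\|_{L_2(\Gamma)} \le C\bigl(\|\hat D\|_{L_2(0,1)}+\|\{n\hat\rho_n\}\|_{l_2}\bigr)$. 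Making this degenerate-point analysis uniform in $n$ --- and using that for $a\ne\pm1$ only finitely many such indices occur, as already noted in Remark~\ref{rem:nonemp} --- is the crux of the whole argument.
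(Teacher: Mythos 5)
Note first that this paper contains no proof of Proposition~\ref{prop:quasi}: it is quoted verbatim from \cite{Bond25-quasi} (Theorem~2.5) and used as a black box in the proof of Theorem~\ref{thm:loc}, so your attempt can only be judged on its own merits. Its algebraic core is sound and is certainly part of any correct proof: since $C_0(1,\la_n)S_0'(1,\la_n)=1$ at zeros of $h$, the definitions \eqref{defd} and \eqref{defsi} give $d^2(\la_n)-H^2(\la_n)=4$, hence $H(\la_n)=\sigma_n\sqrt{d^2(\la_n)-4}$, and the data determine $a C_0(1,\la_n)=\tfrac12\bigl(d(\la_n)+H(\la_n)\bigr)$ and $a^{-1}S_0'(1,\la_n)=\tfrac12\bigl(d(\la_n)-H(\la_n)\bigr)$; you also correctly locate the delicate indices at $\sigma_n=0$, of which there are finitely many when $a\ne\pm1$.

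However, the machinery you erect on this has a genuine flaw. Your Floquet--Weyl function $M_0=\bigl(-H\pm\sqrt{d^2-4}\bigr)/(2ah)$ is not meromorphic in $\la$: the function $\sqrt{d^2(\la)-4}$ has a branch point at every simple real zero of $(d-2)(d+2)$, i.e.\ at the eigenvalues of the quasiperiodic problem \eqref{bcquasi} and of the same problem with $a$ replaced by $-a$. These are infinitely many points, they lie inside the contour $\Gamma$, and they are in general disjoint from $\{\la_n\}$; the cancellation of a factor $(\la-\la_n)$ that you invoke --- and which hypothesis \eqref{condsi} secures --- applies only at the finitely many Dirichlet eigenvalues with $\sigma_n=0$, not at these other branch points. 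Consequently, the Cauchy-theorem derivation of \eqref{maineq}, and hence of the main equation \eqref{GL}, which in Section~\ref{sec:bound} rests on $\hat M$ being meromorphic inside $\Gamma$ with poles only at eigenvalues, does not transplant to the loop: one would have to account for branch-cut (continuous-spectrum-type) contributions, which you neither set up nor estimate, and even your bound $\|\hat M_0(\rho^2)\|_{L_2(\Gamma)}\le C(\|\hat D\|+\|\{n\hat\rho_n\}\|)$ is not well defined before a branch and the cuts are fixed. The natural repair is to avoid $M_0$ entirely: convert $(d,\{\la_n\},\{\sigma_n\})$ into classical Dirichlet spectral data $\{\la_n,\alpha_n\}$ via $\alpha_n=\tfrac{d}{d\la}h(\la)\big|_{\la=\la_n}\cdot S_0'(1,\la_n)$ with $S_0'(1,\la_n)=\tfrac{a}{2}\bigl(d(\la_n)-\sigma_n\sqrt{d^2(\la_n)-4}\bigr)$, which uses $d$ only at the real points $\la_n$, where $d^2-4\ge 0$ and the sign ambiguity is resolved by the given $\sigma_n$, and then apply Gelfand--Levitan stability for that classical problem.

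A second substantive gap is the realization (existence) part. The claim that $\tilde d$, $\{\tilde\la_n\}$, $\{\sigma_n\}$ are \emph{exactly} the spectral data of $\mathscr L(\tilde q_0,a)$ cannot be dispatched by ``verifying that the reconstructed $\tilde S_0$ solves the initial value problem'': the GL-type step at best produces a potential whose Dirichlet data match, while the discriminant involves $\tilde C_0(1,\la)$ and $\tilde S_0'(1,\la)$ as entire functions. One needs an extra argument, for instance: by the unit-Wronskian identity the reconstructed discriminant $\tilde d_{\mathrm{new}}$ satisfies $\tilde d_{\mathrm{new}}(\tilde\la_n)=\tfrac12(\tilde d+\tilde H)(\tilde\la_n)+\tfrac12(\tilde d-\tilde H)(\tilde\la_n)=\tilde d(\tilde\la_n)$ for all $n$, so $\tilde d_{\mathrm{new}}-\tilde d=g\,\tilde h$ with $g$ entire, and the growth of both sides (the difference is $\rho^{-1}$ times a Paley--Wiener function of type $1$, while $\tilde h(\rho^2)\asymp e^{|\mathrm{Im}\,\rho|}/|\rho|$ away from its zeros) forces $g\equiv 0$ by a Liouville-type argument. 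Likewise, realness of $\tilde q_0$ ``from the symmetry of the data'' is asserted, not shown. Without the branch-cut issue repaired and this realization step supplied, your proposal does not prove the proposition.
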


Proposition~\ref{prop:quasi} together with the results of Sections~\ref{sec:bound} and~\ref{sec:transit} imply the stability of recovering the potential $q_0$ and so allow us to complete the proof of the main theorem.

\begin{proof}[Proof of Theorem~\ref{thm:loc}]
Suppose that the problem $\mathcal L(\mathbf{q})$ and the potentials $\tilde{\mathbf{q}}$ satisfy the hypothesis of Theorem~\ref{thm:loc} for some $\eps > 0$. If $\eps$ is small enough, then the estimates \eqref{estqj} for $j = \overline{1,m}$ follow from Theorem~\ref{thm:GLloc}. Next, subsequently applying Lemmas~\ref{lem:stabaux} and~\ref{lem:stabla}, we arrive at the estimate
$$
\|\hat D \|_{L_2(0,1)} + \| \{ n \hat \rho_n \}_1^{\infty} \|_{l_2} \le C \left( \| \rho^m \hat \Delta(\rho^2) \|_{L_2(\mathbb R)} + \| \rho^m \hat \Delta_1(\rho^2) \|_{L_2(\mathbb R)} + \sum_{j = 1}^m \| \hat q_j \|_{L_2(0,T_j)} \right) \le C \de,
$$
where $\de$ is defined by \eqref{difDe}. Consequently, for sufficiently small $\eps$, the potential $\tilde q_0$ fulfills the conditions of Proposition~\ref{prop:quasi}. In particular, the condition \eqref{condsi} is necessary for the quasiperiodic problem $\mathscr L(\tilde q_0, a)$ by virtue of \cite[Theorem~2.2]{Bond25-quasi} and the hypothesis $\sigma_n = \tilde \sigma_n$. Applying Proposition~\ref{prop:quasi}, we get the estimate \eqref{estq0}, which concludes the proof.
\end{proof}

Note that Lemma~\ref{lem:stabla} requires the additional condition $h(0) \ne 0$. Nevertheless, that condition can be removed by a technical modification and does not influence the final result \eqref{estqj}.

\section{Uniform stability of the inverse problem} \label{sec:uni}

In this section, we discuss the uniform stability of Inverse Problem~\ref{ip:quasi} and, in particular, prove Theorem~\ref{thm:uni} for the boundary edges. Our proof does not require the Gelfand-Levitan-type equation \eqref{GL} and relies on the reconstruction formula of the method of spectral mappings~\cite{Yur02, Yur08}.

Let numbers $Q > 0$, $a \in \mathbb R \setminus \{ 0 \}$, and $\{ T_j \}_0^m$ be fixed. 
Consider any potentials $\mathbf{q}$ and $\tilde{\mathbf{q}}$ in $B_Q$.

\begin{proof}[Proof of Theorem~\ref{thm:uni}]
Fix $k \in \{ 1, 2, \dots, m \}$. For brevity, let us omit the index $k$ in such notations as $q := q_k$, $T := T_k$, etc., similarly to Section~\ref{sec:bound}.

Our proof relies on the following reconstruction formula of the method of spectral mappings (see \cite{Yur02, Yur08}):
\begin{equation} \label{recq}
q(x) = \tilde q(x) + \frac{1}{2\pi i} \int_{\Gamma} \bigl( S(x, \la) \tilde S(x, \la) \bigr)' \hat M(\la) \, d\la,
\end{equation}
where the contour $\Gamma$ is defined in \eqref{defGa}. Note that, by virtue of Corollary~\ref{cor:stabDelta}, the spectra $\Lambda$ and $\tilde \Lambda$ possess the same lower bound that depends only on $Q$. Therefore, one can choose the parameter $\tau > 0$ in \eqref{defga} that is large enough for every $\mathbf{q}$ and $\tilde{\mathbf{q}}$ in the ball $B_Q$.

By Lemma~\ref{lem:hatM}, we have $\hat M(\rho^2) \in L_2(\ga)$. Using the definition \eqref{defM}, the estimates \eqref{est2side}, which are uniform in the ball $B_Q$, and following the proof of Theorem~\ref{thm:GLloc}, we get the estimate \eqref{estM} with a constant $C$ that depends only on $Q$.

The both functions $S(x,\la)$ and $\tilde S(x,\la)$ satisfy the asymptotics \eqref{asymptS} and \eqref{asymptSp}, where the $O$-estimates are uniform in $B_Q$. Substituting those asymptotics into \eqref{recq} and using the relation $d\la = 2 \rho d\rho$, we derive
$$
\hat q(x) = I_1(x) + I_2(x), \quad I_1(x) := \frac{1}{\pi i} \int_{\ga} \sin 2 \rho x \hat M(\rho^2) \, d \rho, \quad
I_2(x) = \int_{\ga} O(\rho^{-1}) \hat M(\rho^2) \, d\rho,
$$
where $O(\rho^{-1})$ is actually a continuous function of $x \in [0,T]$. Clearly, $I_1(x)$ is the Fourier transform of the function $\hat M(\rho^2)$ satisfying \eqref{estM}. Hence
$$
I_1 \in L_2(0, T), \quad \| I_1 \|_{L_2(0,T)} \le C \de_k.
$$
In view of \eqref{estM}, the integral $I_2(x)$ converges absolutely and uniformly on $[0,T]$, so
$$
I_2 \in C[0,T], \quad |I_2(x)| \le C \de_k, \quad x \in [0,T].
$$
This together with \eqref{defdek} imply \eqref{quni}.
\end{proof}

Analogously to the proofs of Lemmas~\ref{lem:stabaux} and \ref{lem:stabla}, we can show the uniform stability of Auxiliary Problem~\ref{lem:stabaux} in the ball $B_Q$. However, there arise difficulties with the uniform stability of Inverse Problem~\ref{ip:quasi}. In \cite{Bond25-quasi}, the author has obtained the uniform stability of the quasiperiodic inverse problem under the conditions
\begin{equation} \label{reqd}
(-1)^n \sign a \cdot d(\la_n) 
\begin{cases}
\ge 2 + \eps & \text{if} \: \sigma_n \ne 0, \\
= 2 & \text{if} \: \sigma_n = 0,
\end{cases}
\quad n \ge 1,
\end{equation}
which are not guaranteed in the ball $\| q_0 \|_{L_2(0,1)} \le Q$. Furthermore, the author has not found any conditions in terms of the spectral data of the problem \eqref{eqv}--\eqref{mc} on the graph that make it possible to achieve \eqref{reqd} for the spectral data of the corresponding quasiperiodic problem. Thus, the question of the uniform stability for reconstruction of the potential $q_0$ on the loop remains open.

\medskip

{\bf Funding.} This work was supported by Grant 24-71-10003 of the Russian Science Foundation, https://rscf.ru/en/project/24-71-10003/.

\medskip

\noindent Natalia Pavlovna Bondarenko \\

\noindent 1. Department of Mechanics and Mathematics, Saratov State University, 
Astrakhanskaya 83, Saratov 410012, Russia, \\

\noindent 2. Department of Applied Mathematics, Samara National Research University, \\
Moskovskoye Shosse 34, Samara 443086, Russia, \\

\noindent 3. S.M. Nikolskii Mathematical Institute, RUDN University, 6 Miklukho-Maklaya St, Moscow, 117198, Russia, \\

\noindent 4. Moscow Center of Fundamental and Applied Mathematics, Lomonosov Moscow State University, Moscow 119991, Russia.\\

\noindent e-mail: {\it bondarenkonp@sgu.ru}

\end{document}